\RequirePackage{amsmath}
\documentclass[10pt, reqno]{amsart}
\usepackage{amssymb,url, color, mathrsfs}
\usepackage[colorlinks=true, bookmarks=true, pdfstartview=FitH, pagebackref=true, linktocpage=true, linkcolor = magenta, citecolor = blue]{hyperref}
\usepackage[short,nodayofweek]{datetime}
\usepackage{longtable}
\usepackage{indentfirst, tabularx}
\usepackage[table]{xcolor}
\usepackage{float, hhline, colortbl,cite}
\usepackage{graphicx}
\newtheorem{theorem}{Theorem}[section]
\newtheorem{lemma}[theorem]{Lemma}
\newtheorem{proposition}[theorem]{Proposition}

\def\R{\mathbb R}


\def\ge{\geq}
\def\De{\Delta} 

\def\ep{\epsilon}
\def\pa{\partial}
\def\lt{\left}
\def\rt{\right}

\def\sp{\R^n}

\def\i0i{\int_0^\infty}


\numberwithin{equation}{section}

\newcounter{hypo}





\makeatletter
\@addtoreset{equation}{section}
\makeatother


\title[ Fractional elliptic problems]{Liouville type theorems for  fractional elliptic problems}
\author[A. T. Duong]{Anh Tuan Duong}
\address{Anh Tuan Duong\\Department of Mathematics, Hanoi National University of Education\\136 Xuan Thuy Street, Cau Giay District, Ha Noi, Viet Nam}
\email{tuanda@hnue.edu.vn}
\author[V.H.Nguyen]{Van Hoang Nguyen}
\address{Van Hoang Nguyen\\Institute of Mathematics, Vietnam Academy of Science and Technology, Ha Noi, Viet Nam}
\email{vanhoang0610@yahoo.com}
\subjclass{Primary: 35B53, 35J60; Secondary: 35B35.}
\keywords{Liouville type theorems, Stable solutions, Fractional Lane-Emden system, Fractional elliptic equation}
\begin{document}
	\begin{abstract} In this paper, we   establish  Liouville type theorems for  stable  solutions on the whole space $\mathbb R^N$  to the fractional  elliptic equation
		$$(-\Delta)^su=f(u)$$
where the nonlinearity is nondecreasing and convex.  We also obtain a classification of stable solutions to  the fractional Lane-Emden system 
		$$\begin{cases}
		(-\Delta)^s u = v^p \mbox{ in }\R^N \\
		(-\Delta)^s v = u^q \mbox{ in }\R^N
		\end{cases}$$
 with  $p>1$  and $ q>1$. In our knowledge, this is  the first classification result for stable solutions of the fractional Lane-Emden system in literature.
	\end{abstract}
	\maketitle
	\section{Introduction}
Let $s$ be a positive real number satisfying $0<s<1$. 
The fractional Laplacian is defined on the space of rapidly decreasing functions by 
\begin{align*}
(-\De)^s u(x) = c_{N,s} \lim_{\ep \to 0} \int_{\R^N\setminus B_\ep(x)} \frac{u(x) - u(y)}{|x-y|^{N+2s}} dy, 
\end{align*}
where $c_{N,s}$ is the normalization constant and $B_\epsilon(x)$ is the ball centered at $x$ with radius $\ep$.
Notice further  that, see e.g., \cite{MRS16},  $(-\De)^s u(x)$ is well-defined at any $x\in\mathbb R^N$ when  $u \in C^{2\sigma} (\mathbb R^N)\cap \mathcal L_s(\R^N)$ for some $\sigma >s$  with
\[
\mathcal L_s(\R^N)= \left\{u :\R^N \to \R; \, \int_{\R^N} \frac{|u(y)|}{(1 + |y|)^{N+2s}} dy < \infty \right\}.
\]
We are, in this paper, interested  in   the nonexistence of nontrivial nonnegative stable solutions to  the fractional elliptic equation
\begin{equation}\label{eq:ellipticeq}
(-\Delta)^su=f(u)\mbox{ in } \R^N
\end{equation}
with general nonlinearity
and the fractional Lane-Emden system
\begin{equation}\label{eq:system}
\begin{cases}
(-\Delta)^s u = v^p \mbox{ in }\mathbb{R}^N\\
(-\Delta)^s v = u^q\mbox{ in }\mathbb{R}^N
\end{cases}
\end{equation}
with $p>1$ and $q>1$. In what follows, all solutions of \eqref{eq:ellipticeq} and \eqref{eq:system} are considered in the space  $C^{2\sigma} (\mathbb R^N)\cap \mathcal L_s(\R^N)$ for some $\sigma >s$.

\subsection{Fractional elliptic equation}
The first topic in this paper is concerned with  the classification of nontrivial nonnegative stable solutions to the problem \eqref{eq:ellipticeq}. Here, a solution to \eqref{eq:ellipticeq}  is called stable if 
\begin{equation}\label{e511197}
\int_{\mathbb R^N}f'(u)\phi^2dx\leq \frac{c_{N,s}}{2}\int_{\mathbb R^N}\int_{\mathbb R^N}\frac{(\phi(x)-\phi(y))^2}{|x-y|^{N+2s}}dydx \mbox{ for all }\phi\in C_c^\infty(\mathbb R^N).
\end{equation}
In the local case $s=1$, the nonexistence of nontrivial stable solutions of \eqref{eq:ellipticeq} has been much studied in the last two decades. For instant, in the  typical cases $f(u)=|u|^{p-1}u$ or $f(u)=e^u$,  the classification of stable solutions to \eqref{eq:ellipticeq}  was completely established in the pioneering articles \cite{Far07,Far07_1}, see also \cite{Cow16,CF12,Dup11}.  In \cite{Far07}, among other things,  it was shown that the equation $$-\Delta u=|u|^{p-1}u \mbox{ in } \R^N$$ 
admits nontrivial stable solutions $u\in C^2(\R^N)$ if and only if $N\geq 11$ and 
$$p\geq \frac{(N-2)^2-4N+8\sqrt{(N-1)}}{(N-10)(N-2)}.$$
It was also proved in \cite{Far07_1} that the equation $$-\Delta u=e^u\mbox{ in } \R^N$$
has no nontrivial  stable solution when $N<10$. This result is sharp.  In \cite{WY12}, the author obtained an optimal classification of stable weak solutions to the H\'{e}non type elliptic equations.

 In the case of  general nonlinearities, the nonexistence of nontrivial bounded radial stable solutions to \eqref{eq:ellipticeq} was obtained when $N\leq 10$ and $f\in C^1(\mathbb{R})$, see \cite{CC04,Ville07}. In non-radial case,  the classification of bounded stable solutions to \eqref{eq:ellipticeq} in low dimension was established in \cite{FSV08,Dan05,DF10}. In particular, Dupaigne and Farina proved  that,  under the assumption $f\in C^1(\mathbb{R})$, $ f\geq 0$ and $N\leq 4$, any bounded stable solution $u\in C^2(\mathbb{R})$  of  \eqref{eq:ellipticeq} with $s=1$ must be constant.  In higher dimensions,  the Liouville type theorem for bounded stable solutions of \eqref{eq:ellipticeq} with $s=1$  has been  established in \cite{DF10}, see also \cite{DF09}. Let us recall the assumptions on the nonlinearity used in \cite{DF10}. Consider  $f\in C^0(\mathbb R^+)\cap C^2(\mathbb R^+_*)$. For  $t>0$, define
$$q(t)=\begin{cases}
\dfrac{(f')^2}{ff''}(t)&\mbox{ if }ff''(t)\not=0\\
+\infty&\mbox{ if }ff''(t)=0
\end{cases}.
$$
Assume that  there exists the limit
\begin{equation}\label{e411191}
q_0=\lim_{t\to 0^+}q(t)\in \overline{\mathbb{R}}.
\end{equation}
As shown in \cite{DF10} that when $f$ is nondecreasing, convex and $f(0)=0$, then $q_0\geq 1$.

The following classification in arbitrary dimension is proved in \cite{DF10}.

\noindent{\bf Theorem A. }{\it 	Let $s=1$, $f\in C^0(\mathbb R^+)\cap C^2(\mathbb R^+_*)$ is nondecreasing, convex, $f>0$ in $\mathbb R^+_*$ and \eqref{e411191} holds. Assume that $u\in C^2(\mathbb R^N)$ is a bounded, nonnegative stable solution to \eqref{eq:ellipticeq}. Then $u\equiv 0$ if one of the following conditions is satisfied:
	\begin{enumerate}
		\item $N< 10$.
		\item $N=10$ and $p_0<+\infty$, where $p_0$ is the conjugate exponent of $q_0$, i.e., 
		\begin{equation}\label{e411192}
		\frac{1}{p_0}+\frac{1}{q_0}=1
		\end{equation}
		\item $N>10$ and $p_0<p_c(N,1)$, where $p_0$ is given in \eqref{e411192} and 
		$$p_c(N,1)=\frac{(N-2)^2-4N+8\sqrt{(N-1)}}{(N-10)(N-2)}.$$
\end{enumerate}}
It is worth to mentioning that, the classification of stable solutions to quasilinear elliptic equation has been also investigated recently, see e.g., \cite{Le16,CES09,DFSV09}.

Let us now consider \eqref{eq:ellipticeq} in the nonlocal case $0<s<1$. A natural question in studying the equations with fractional Laplacian is that whether one can obtain similar classifications to the case of Laplace operator.  The pioneering work in the classification of stable solutions to the fractional Lane-Emden equation, i.e., \eqref{eq:ellipticeq} with $f(u)=|u|^{p-1}u$, is due to D\'{a}vila, Dupaigne and Wei \cite{DDW17} where the authors exploited the monotonicity  formula and some nonlinear integral estimates. This technique has been used and generalized in \cite{FW16,FW17,RC19} to some fractional  elliptic equations with polynomial nonlinearities and weights.

In the case of general nonlinearity, the authors of \cite{DS10} obtained a fractional version of a result  in \cite{DF10 }in low dimensional case. More precisely, under the same assumptions of $f$, i.e., $f\in C^1(\mathbb{R})$, $ f\geq 0$, it was  shown that \eqref{eq:ellipticeq} has no nontrivial  bounded stable solution in dimension $N\leq 2$ if $0<s<\frac{1}{2}$ and in dimension $N\leq 3$ if $\frac{1}{2}\leq s<1$. However, in order to obtain a fractional version of Theorem A, the techniques in \cite{DS10,DDW17} seem not applicable.  In this paper, we develop a new technique  which allows one to use a  non-compactly supported function as test function. From this technique and delicate nonlinear integral estimates on half space $\R^{N+1}_+$, we obtain  a nonexistence result of nontrivial stable solutions of \eqref{eq:ellipticeq}   given in  the following theorem.
\begin{theorem}\label{t:elliptic}
Let $0<s<1$.	Assume that $f\in C^0(\mathbb R^+)\cap C^2(\mathbb R^+_*)$ is nondecreasing, convex, $f>0$ in $\mathbb R^+_*$ and \eqref{e411191} holds. Then the problem  \eqref{eq:ellipticeq} has no nontrivial bounded nonnegative  stable solution if one of the following conditions is satisfied:
	\begin{enumerate}
		\item $N<10s$.
		\item $N=10s$ and $p_0<+\infty$, where $p_0$ is given in \eqref{e411192}.
		\item $N>10s$ and $p_0<p_c(N,s)$, where $p_0$ is given in \eqref{e411192} and 
		$$p_c(N,s)=\frac{(N-2s)^2-4sN+8s\sqrt{s(N-s)}}{(N-10s)(N-2s)}.$$
	\end{enumerate}
\end{theorem}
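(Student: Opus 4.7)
The natural approach is via the Caffarelli--Silvestre extension. Writing $\tilde u(x,y)$ for the $s$-harmonic extension of $u$ to $\R^{N+1}_+$, one has $\mathrm{div}(y^{1-2s}\nabla \tilde u)=0$ in $\R^{N+1}_+$ with boundary condition $-\kappa_s \lim_{y\to 0^+} y^{1-2s}\partial_y \tilde u = f(u)$ on $\{y=0\}$. The stability inequality \eqref{e511197} can then be recast in the equivalent form
\begin{equation*}
\int_{\R^N} f'(u)\phi^2\,dx \le \kappa_s \int_{\R^{N+1}_+} y^{1-2s}|\nabla \Phi|^2\,dx\,dy
\end{equation*}
for \emph{any} $\Phi$ in the weighted Sobolev space on $\R^{N+1}_+$ whose trace on $\{y=0\}$ is $\phi$; the freedom to use non-harmonic extensions is what will allow us to mimic the Dupaigne--Farina strategy of Theorem~A in the half-space.

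The core two-step argument runs as follows. For a parameter $\alpha$ to be determined, first test the equation $(-\Delta)^s u = f(u)$ against $f(u)^{2\alpha-1}\eta^2(\cdot,0)$, where $\eta(x,y)$ is a smooth non-negative cutoff on $\R^{N+1}_+$, and integrate by parts against $\mathrm{div}(y^{1-2s}\nabla \tilde u) = 0$ to express the boundary integral $\int_{\R^N} f(u)^{2\alpha} f'(u)\eta^2(x,0)\,dx$ as a volume integral over $\R^{N+1}_+$ involving $\nabla \tilde u$ and $\nabla \eta$. Second, apply the stability inequality with the natural extension $\Phi = f(\tilde u)^{\alpha} \eta$: expanding $|\nabla \Phi|^2$ produces a quadratic-gradient term matching the one in the previous identity, a cross term, and a cutoff-gradient term. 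A suitable linear combination of the two estimates, together with the convexity condition on $f$ encoded in $q_0 \ge 1$, yields a coercive inequality of the schematic form
\begin{equation*}
c(\alpha,q_0) \int_{\R^N} f(u)^{2\alpha+1} \eta^2(x,0)\,dx \le C \int_{\R^{N+1}_+} y^{1-2s} f(\tilde u)^{2\alpha}\,|\nabla \eta|^2\,dx\,dy,
\end{equation*}
valid whenever $\alpha$ lies in a range $(\alpha_-(q_0), \alpha_+(q_0))$ that is non-empty precisely under the exponent/dimension conditions of the theorem.

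The third step is to let the cutoff spread. In the approach of \cite{DS10,DDW17}, $\eta$ is taken to be compactly supported in $(x,y)$, which yields a scaling estimate that breaks down at the critical dimension $N=10s$ and the critical exponent $p_c(N,s)$. The new ingredient here is to use $\eta(x,y)$ that is \emph{not} compactly supported in the $y$-variable, with a radial profile on the half-space calibrated to the weighted Hardy inequality on $\R^{N+1}_+$, exploiting the decay of $\tilde u$ at infinity inherited from the boundedness of $u$ via the Poisson kernel representation of the extension. Sending the scaling parameter to infinity, the right-hand side tends to $0$ under the conditions $N<10s$, or $N=10s$ with $p_0<\infty$, or $N>10s$ with $p_0<p_c(N,s)$; combined with $f>0$ on $\R^+_*$, this forces $u\equiv 0$.

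The main obstacle is expected to be the delicate non-compactly supported test function argument: one must quantify the decay of $\tilde u(x,y)$ globally on $\R^{N+1}_+$, verify that all cross terms of the form $\int y^{1-2s} f(\tilde u)^{2\alpha-1} f'(\tilde u)\,\eta\,\nabla \tilde u \cdot \nabla \eta$ are absorbed with sharp constants that match the weighted Hardy constant on $\R^{N+1}_+$, and extract the Joseph--Lundgren threshold $p_c(N,s)$ exactly. The degenerate weight $y^{1-2s}$ and the absence of a pointwise chain rule for $f(\tilde u)$ inside the volume integral make these estimates substantially more delicate than their local analogues in \cite{DF10}.
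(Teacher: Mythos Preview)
Your framework for the core estimate---test the extended equation with $\psi(U)\bar\eta^2$, apply stability with $\Phi=f(\tilde u)^\alpha\eta$, and combine---matches the paper's Case~2 and correctly transplants the Dupaigne--Farina mechanism to the half-space. Two substantive points, however, separate your sketch from a proof.

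First, you omit an entire case. The integral estimate you describe requires $\alpha\in[1,1+1/\sqrt{q_0})$, and this interval is empty when $q_0=+\infty$ (equivalently $p_0=1$), a possibility allowed under each of the three hypotheses of the theorem. The paper handles the regime $q_0>N/(2s)$ by a completely different route: from $q_0>N/(2s)$ one gets $f(t)\ge c_1 t^p$ near $0$ for some $p<\tfrac{N}{N-2s}$; one then builds a rescaled subsolution $\varphi_R$ of the subcritical problem on $B_R$, proves a lower bound $u(x)\ge c_2|x|^{-(N-2s)}$ via the maximum principle applied to the extension, and concludes by a sliding argument. None of this appears in your proposal.

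Second, your ``third step'' is not the paper's mechanism and, as written, rests on a false premise: the Poisson extension of a bounded function need not decay (take $u\equiv c$), so no ``decay of $\tilde u$ at infinity inherited from boundedness of $u$'' is available, and the Hardy-inequality route you suggest is not what is used. The paper instead takes $\bar\eta(x,t)=\Phi_{\tilde R}(x,t)\,(1+|x|^2+t^2)^{-(N+2s)/4}$, applies Jensen to the Poisson representation to get $f^{2\alpha}(U(x,t))\le\int_{\R^N}P_s(x-y,t)\,f^{2\alpha}(u(y))\,dy$, and uses Fubini to convert the half-space integral into a \emph{boundary} integral against the weight $\rho_{N+2s}(y/R)=(1+|y/R|^2)^{-(N+2s)/2}$. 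After sending $\tilde R\to\infty$ this yields
\[
\int_{\R^N} f^{\frac1{q_1}+2\alpha}(u)\,\rho_{N+2s}(x/R)\,dx\ \le\ C R^{-2s}\int_{\R^N} f^{2\alpha}(u)\,\rho_{N+2s}(y/R)\,dy,
\]
and a single H\"older inequality on $\R^N$ (not a Hardy inequality on $\R^{N+1}_+$) gives the bound $CR^{N-2s-4\alpha s q_1}$, from which the threshold $p_c(N,s)$ drops out by pushing $\alpha\to 1+1/\sqrt{q_0}$ and $q_1\to q_0$. The Jensen/Poisson/Fubini reduction to the boundary---not pointwise control of $\tilde u$---is the key idea you are missing. (A minor related point: the left side of your schematic inequality should carry $f(u)^{2\alpha+1/q_1}$, not $f(u)^{2\alpha+1}$; the correct exponent is what makes the H\"older step close.)
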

Remark that, recently, the authors of the present paper have obtained a nonexistence of stable solutions of the fractional Gelfand equation, i.e., $f(u)=e^u$ under the assumption that $N<10s$, see \cite{TuanHoang}. Very recently, the complete classification of stable weak solutions to the fractional Gelfand equation has been proved in \cite{HY20}.
\subsection{Fractional Lane-Emden system}
The second topic in this paper is to study the nonexistence of positive stable solutions to the fractional Lane-Emden system \eqref{eq:system}.  Motivated by \cite{Mon05,Cow13,FG13}, 
a positive solution $(u,v)\in \big(C^{2\sigma} (\mathbb R^N)\cap \mathcal L_s(\R^N)\big)\times \big(C^{2\sigma} (\mathbb R^N)\cap \mathcal L_s(\R^N)\big)$ of \eqref{eq:system} is called stable  if there are two positive functions $\zeta_1$ and $\zeta_2$ satisfying 
\begin{equation}\label{eq:stable}
\begin{cases}
(-\Delta)^s \zeta_1 = pv^{p-1}\zeta_2 \mbox{ in }\mathbb{R}^N\\
(-\Delta)^s \zeta_2 = qu^{q-1}\zeta_1\mbox{ in }\mathbb{R}^N
\end{cases}.
\end{equation}
In this topic, let us begin with the local case. When $s=1$,  \eqref{eq:system} is known as the  Lane-Emden system which has received considerably attention in recent years, see the  pioneering  works \cite{Mit93,Mit96,SZ96,SZ98} and recent results \cite{Sou09, Cow13, MY19,Hu15,Hu18,HZ16,HHM16}. Concerning the class of classical positive solutions, the well-known conjecture states that the system \eqref{eq:system} with $s=1$ admits  solutions if and only if 
$$p>0, q>0\mbox{ and }\frac{1}{p+1}+\frac{1}{q+1}>1-\frac{2}{N}.$$
This conjecture has been solved only for the radial solutions in any dimension, see \cite{Mit93,Mit96,SZ96,SZ98} and for the nonradial solutions in low dimensions $N\leq 4$, see \cite{Mit96, SZ98, Sou09}. Some partial results in higher dimensions  were also obtained in \cite{Sou09}.

Concerning the class of stable positive solutions of \eqref{eq:system} with $s=1$, a nonexistence result was shown in the pioneering work of Cowan \cite{Cow13}. 

\noindent{\bf Theorem B.}
{\it  Let $s=1$,
	$$t_+=\sqrt{\frac{pq(q+1)}{p+1}}+\sqrt{\frac{pq(q+1)}{p+1}-\sqrt{\frac{pq(q+1)}{p+1}}}$$
	and 
	$$t_-=\sqrt{\frac{pq(q+1)}{p+1}}-\sqrt{\frac{pq(q+1)}{p+1}-\sqrt{\frac{pq(q+1)}{p+1}}}.$$
	\begin{enumerate}
		\item[i) ]  Suppose  that  $2<q\leq p$ and
		\begin{equation}
		N-2-\frac{4p+4}{pq-1}t_+<0.
		\label{ecow}
		\end{equation}
		Then there has  no positive stable solution of \eqref{eq:system}. In particular,  there has no positive stable solution of \eqref{eq:system}  for any $2\leq q\leq p$ if $N\leq 10$.
		\item[ii) ] Suppose that   $1<q\leq 2$,  $ t_-<\frac{q}{2}$ and  \eqref{ecow}. Then there has no positive stable solution of \eqref{eq:system}.
	\end{enumerate}
}

The main idea used in \cite{Cow13} is a combination of stability inequality, comparison principle and bootstrap argument. After that, this idea was  exploited by many authors in studying various elliptic systems \cite{Hu15,Hu18,HZ16,HHM16,DP17,Duong17}. In \cite{Hu15}, the author has obtained a classification  of positive stable solutions to the  weighted Lane-Emden system 
$$\begin{cases}
-\Delta u=(1+|x|^2)^\frac{\alpha}{2} v^p\\
-\Delta v=(1+|x|^2)^\frac{\alpha}{2} u^q
\end{cases} \mbox{ in }\R^N,$$
 where $\alpha>0$ and $\frac{4}{3}<q\leq p$ or $1<q\leq\min (p,\frac{4}{3})$ with additional assumption. In \cite{HHM16}, the authors have established a Liouville type theorem for the Lane-Emden system with general weights
 $$\begin{cases}
 -\Delta u=\rho(x) v^p\\
 -\Delta v=\rho(x) u^q
 \end{cases} \mbox{ in }\R^N,$$
 where $\rho$  is a radial function satisfying $\rho(x)\geq C (1+|x|^2)^\frac{\alpha}{2}$ at infinity.
 In \cite{HHM16}, a new inverse comparison principle is introduced for bounded positive stable solutions in order to deal with the case $1<p\leq \frac{4}{3}$.  In particular, the range of nonexistence result in \cite{HHM16} is larger than that in \cite{Cow13,Hu15}. We should also mention some nonexistence results of positive stable solutions to elliptic systems involving advection terms have been also studied in \cite{Duong17,Hu18}  by developing the approach of Cowan.
 
To the best of our knowledge, there has no works in literature classifying stable solutions to fractional elliptic systems. In fact, some serious difficulties arise when one wants to classify positive stable solutions to systems involving the fractional Laplacian. The first one is that, in order to obtain an {\it a priori} estimate,  the standard test-function method does not work well with the fractional Laplacian since $(-\Delta)^s \phi$ is not, in general, compactly supported for $\phi\in C^\infty_c(\R^N)$. In addition, the bootstrap  argument-a key step to get better exponent in the study of Lane-Emden system- becomes a challenging problem since one has no estimates on compact sets in the nonlocal case and one needs to transform nonlinear integral estimates on half space $\R^{N+1}_+$ to that on $\R^N$. Besides that, one also needs to establish a comparison principle for  the system \eqref{eq:system}. 

In this paper, we obtain the following theorem.
\begin{theorem}\label{t}
Let $0<s<1$.
\begin{enumerate}
	\item 	If  $p\geq q>\frac{4}{3}$ and 
	\begin{equation}\label{e1011191}
	N<2s+\frac{4s(p+1)}{pq-1}t_+,
	\end{equation}
	then the system \eqref{eq:system} has no stable positive solution.
	\item  If $1<q\leq \min (p,\frac{4}{3})$, $t_-<\frac{q}{2}$ and \eqref{e1011191} holds, then the system \eqref{eq:system} has no stable positive solution.
\end{enumerate}
\end{theorem}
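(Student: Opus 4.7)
The plan is to adapt the Cowan-style argument underlying Theorem B to the nonlocal setting, combining a refined stability inequality with a fractional comparison principle and a Caffarelli-Silvestre bootstrap, but with the crucial modification of using test functions that are \emph{not} compactly supported (to bypass the fact that $(-\Delta)^s$ does not preserve compact support).

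First I would extract from \eqref{eq:stable} a scalar stability inequality. A fractional Allegretto--Piepenbrink identity (multiply each equation in \eqref{eq:stable} by $\phi^2/\zeta_i$ and use the pointwise inequality $\phi^2(-\Delta)^s\zeta\ge 2\phi(-\Delta)^s(\phi)\zeta - (-\Delta)^s(\phi^2)\zeta$) gives, for every $\phi\in C_c^\infty(\R^N)$,
\begin{equation*}
p\int_{\R^N} v^{p-1}\frac{\zeta_2}{\zeta_1}\phi^2\,dx \le \frac{c_{N,s}}{2}[\phi]_{H^s}^2,\qquad q\int_{\R^N} u^{q-1}\frac{\zeta_1}{\zeta_2}\phi^2\,dx \le \frac{c_{N,s}}{2}[\phi]_{H^s}^2,
\end{equation*}
and Cauchy--Schwarz yields the master inequality
\begin{equation*}
\sqrt{pq}\int_{\R^N} u^{(q-1)/2}v^{(p-1)/2}\phi^2\,dx\le \frac{c_{N,s}}{2}[\phi]_{H^s}^2.
\end{equation*}
Inserting $\phi\leadsto u^{a}v^{b}\phi$ and applying Young's inequality promotes this to a one-parameter family of weighted stability inequalities; the numbers $t_\pm$ appearing in the statement are precisely the two roots of the quadratic governing which powers of $u,v$ can be controlled.

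Next I would lift to $\R^{N+1}_+$ via the Caffarelli--Silvestre extension. Let $U,V$ be the $s$-harmonic extensions of $u,v$, so that $\mathrm{div}(y^{1-2s}\nabla U)=\mathrm{div}(y^{1-2s}\nabla V)=0$ with $-\lim_{y\to 0^+}y^{1-2s}\partial_y U = \kappa_s v^p$ and $-\lim_{y\to 0^+}y^{1-2s}\partial_y V = \kappa_s u^q$. Since the harmonic extension minimizes the weighted Dirichlet energy, $[\phi]_{H^s}^2\le C\int_{\R^{N+1}_+}y^{1-2s}|\nabla\Phi|^2$ for any extension $\Phi$ of $\phi$; I am therefore free to take $\Phi = V^a\eta_R$ (and, symmetrically, $U^b\eta_R$), where $\eta_R$ is a cutoff. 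The key novelty is to allow $\eta_R$ only polynomial $x$-decay rather than compact support, so that integration by parts does not discard the nonlocal tail. Using the degenerate harmonicity of $V$ and the boundary flux condition,
\begin{equation*}
(2a-1)\int_{\R^{N+1}_+}y^{1-2s}V^{2a-2}|\nabla V|^2\eta_R^2 = \kappa_s\int_{\R^N}v^{2a-1}u^q\eta_R^2\,dx + \mathrm{(cutoff\ errors)},
\end{equation*}
with the analogous identity for $U$. Coupling this with the stability family and applying Hölder produces weighted bounds
\begin{equation*}
\int_{\R^N} u^{\alpha_k}v^{\beta_k}\eta_R^2\,dx \le C R^{\mu_k},
\end{equation*}
whose exponents improve under a bootstrap loop.

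To close the bootstrap I would need a fractional comparison principle relating integrals of $u^{q+1}$ and $v^{p+1}$. This is exactly where the dichotomy of Theorem \ref{t} enters: for $p\ge q>\tfrac{4}{3}$ a direct Moser-type iteration using the Riesz kernel suffices, while for $1<q\le\tfrac{4}{3}$ an inverse-comparison argument in the spirit of \cite{HHM16} is required, and the hypothesis $t_-<q/2$ is precisely the admissibility condition for the corresponding quadratic in the iteration. The scheme terminates at the first $k$ with $\mu_k<0$, and a careful accounting of the exponents shows that $\mu_k<0$ is achievable exactly when \eqref{e1011191} holds with $t_+$ playing the role of the optimal parameter in the stability family. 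Sending $R\to\infty$ then forces $u\equiv 0$, and the equation for $u$ gives $v\equiv 0$.

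The main obstacle will be the bootstrap-comparison synthesis in the nonlocal setting: proving that the exponent gained per iteration is bounded below so the scheme terminates in finitely many steps, while simultaneously controlling the polynomial cutoff errors produced by the non-compactly supported test functions. These two requirements pull in opposite directions, and balancing them is what forces the precise dimensional threshold $N<2s+\tfrac{4s(p+1)}{pq-1}t_+$ to appear. A secondary obstacle is transferring estimates between the bulk integral on $\R^{N+1}_+$ and the trace integral on $\R^N$ without losing the sharp exponent; the usual trace inequalities cost a power and must be woven carefully into the integration-by-parts identities rather than applied as a black box.
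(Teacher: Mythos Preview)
Your skeleton matches the paper's: derive the scalar stability inequality from \eqref{eq:stable}, establish a comparison between $u$ and $v$, lift to the Caffarelli--Silvestre extension, run a Moser-type bootstrap there, and close by scaling. Two points, however, are misidentified and would cause the argument to stall if followed literally.

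First, the comparison principle must be \emph{pointwise}, not an inequality between integrals. The paper proves $\dfrac{v^{p+1}}{p+1}\le\dfrac{u^{q+1}}{q+1}$ everywhere in $\R^N$ (via concavity of $t\mapsto t^{\tilde\si}$, which gives $(-\De)^s u^{\tilde\si}\ge\tilde\si u^{\tilde\si-1}(-\De)^s u$, followed by a maximum-principle argument on the extension). This pointwise bound is precisely what lets you pass, inside the stability integral, from $u^{(q-1)/2}v^{(p-1)/2}u^{2\ga}$ down to $\sqrt{\tfrac{q+1}{p+1}}\,v^{p}u^{2\ga-1}$, which in turn matches the boundary flux term coming from testing the extended equation with $U^{2\ga-1}\Phi^2$. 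An integral comparison would not allow this substitution under the integral sign, and the coupling between the stability inequality and the equation would break.

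Second, the split $q>\tfrac43$ versus $q\le\tfrac43$ is \emph{not} about two different mechanisms. The paper runs one and the same argument in both cases; no inverse comparison \`a la \cite{HHM16} is invoked. The bootstrap on the extension is seeded by choosing a starting exponent $\tau$ with $t_-\le\tau<\tfrac{q}{2}$: the lower bound places $\tau$ in the admissible window $(t_-,t_+)$ for the reverse-Poincar\'e estimate $\int t^{1-2s}|\na(U^\ga\Phi)|^2\le C\int t^{1-2s}U^{2\ga}|\na\Phi|^2$, while $2\tau<q$ allows one to control $\int u^{2\tau}\rho_{N+2s}(\cdot/R)$ via H\"older against the a priori bound $\int u^{q}\rho_{N+2s}(\cdot/R)\le CR^{N-2sq(p+1)/(pq-1)}$ (valid for \emph{any} positive solution, stable or not). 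A direct computation shows $t_-<\tfrac{q}{2}$ holds automatically when $q>\tfrac43$ and must be assumed when $q\le\tfrac43$; that is the entire content of the dichotomy.

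A smaller correction of emphasis: in the paper the cutoffs used in the bootstrap are compactly supported on $\R^{N+1}_+$; the polynomial weight $\rho_{N+2s}$ enters only afterwards, when the half-space integral $\int U^{2\tau}|\na\Phi|^2 t^{1-2s}$ is pushed back to $\R^N$ via Jensen's inequality on the Poisson representation of $U$. The non-compactly supported test function $\rho_{N+2s}$ is used directly only in the separate a priori estimate mentioned above.
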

Notice that when $p=q$, the condition \eqref{e1011191} is equivalent to $p<p_c(N,s)$ where $p_c(N,s)$ is given in Theorem \ref{t:elliptic}. 

Let us now sketch the outline of the proof  of Theorems \ref{t:elliptic} and \ref{t}. Concerning  Theorem \ref{t:elliptic}, we divide the proof into two cases according to the range of $q_0$. First, when $q_0>\frac{N}{2s}$ then the conjugate exponent $p_0<\frac{N}{N-2s}$. In this case, we make use of some comparison  and the strong maximum principle to get the desired result. When $q_0\leq \frac{N}{2s}$ the proof becomes more involved. The difficulty arises when we need to compare integrals on half space $\R^{N+1}_+$ to $\R^N$. Let $u$ be a bounded  nonnegative stable solution of \eqref{eq:ellipticeq}  and $U$ be the extension of  $u$ in the sense of \cite{CS07}. We first establish an integral  estimate
\begin{align*}
\begin{split}
&\int_{\mathbb{R}^N}f^{\frac{1}{q_1}+2\alpha}(u)\eta^2dx\leq C\int_{\mathbb R^{N+1}_+}f^{2\alpha}(U)(|\nabla \bar{\eta}|^2+|L_s \bar{\eta}^2|)t^{1-2s}dxdt,
\end{split}
\end{align*}
where $\bar{\eta}\in C^\infty_c(\R_+^{N+1}), \eta(x)=\bar{\eta}(x,0)$, $q_1<q_0$, $\alpha\in [1,1+1/\sqrt{q_0})$ and $L_s$ is  the second order differential operator given in \eqref{e1811191}. We next control the right hand side of the inequality above  and choose suitably test function $\bar{\eta}$ to get
 \begin{align*}
\begin{split}
\int_{\mathbb{R}^N}f^{\frac{1}{q_1}+2\alpha}(u)\rho_{N+2s}\left(x/R\right)dx\leq CR^{-2s}\int_{\mathbb R^N} f^{2\alpha}(u(y))\rho_{N+2s}(y/R)dy,
\end{split}
\end{align*}
where $R>0$ and $\rho_{N+2s}(x)=(1+|x|^2)^{-\frac{N+2s}{2}}$. This is, in fact, the most difficult step in the proof.  Finally, some computations and the H\"{o}lder inequality give the desired result.

The proof of Theorem \ref{t}  consists of the following main  steps:

$\bullet$ Establish a stability inequality and give an {\it a priori} estimate of solutions.

$\bullet$ Prove a comparison principle between $u$ and $v$.

$\bullet$ Use  bootstrap argument to get  better result.

One obtains the stability inequality  in Lemma \ref{lem:stabilityineq} by similar argument as in the local case. Nevertheless, the {\it a priori} estimate  of solutions is more delicate since we must prove nonlinear integral estimate on the whole space $\R^N$, see Proposition \ref{dgtiennghiem}. In Proposition \ref{comparisonprinciple}, we establish a comparion principle in nonlocal setting which is more or less new. As mentioned above, a serious  difficulty in dealing with fractional system is that the bootstrap argument in the local case does not work in the nonlocal case.  To overcome this difficulty, we make use of the bootstrap argument for the extensions $U,V$ of $u,v$ on $\mathbb{R}^{N+1}_+$ in the sense \cite{CS07} and develop the technique  in \cite{TuanHoang} which allows us to reduce the estimates on half space $\mathbb{R}^{N+1}_+$ to that on $\R^N$ .
 
 The rest of this paper is organized as follows. In Section \ref{s2}, we give the proof of Theorem \ref{t:elliptic}. In Section \ref{s3}, we prove the stability inequality and the comparison principle for the system  \eqref{eq:system}. The proof of Theorem \ref{t} is given in Section \ref{s4}.

\section{Proof of Theorem \ref{t:elliptic}}\label{s2}
This section is devoted to prove Theorem \ref{t:elliptic}. We first notice that if $u$ is a nontrivial nonnegative solution of \eqref{eq:ellipticeq} then $u$ is a positive solution of \eqref{eq:ellipticeq} by the strong maximum principle for fractional Laplacian. So, in order to prove Theorem \ref{t:elliptic}, it is enough to show that the equation \eqref{t:elliptic} does not possess any positive stable solution under the conditions in this theorem. In what follows, $C$ denotes a generic positive constant which may change from line to line or even in the same line.

  Suppose, in contrary,  that $u$ is a positive  stable solution of \eqref{eq:ellipticeq}. We shall point out a contradiction by considering two different cases of $q_0$.

\noindent{\bf Case 1. $q_0>\frac{N}{2s}$.}

We have  $p_0<\frac{N}{N-2s}$. By using \eqref{e411191}, there exist $p<\frac{N}{N-2s}$ and  $c_1>0$ such that, see \cite[Formula 2.5]{DF10},
\begin{equation}\label{e511193}
f(t)\geq c_1t^p\mbox{ for  } t \mbox{ near }0. 
\end{equation}
Let $\varphi$ be a nonnegative function satisfying
\begin{equation*}
\begin{cases}
(-\Delta)^s\varphi&=c_1\varphi^p\mbox{ in }B_1\\
\varphi&=0\mbox{ in }\mathbb R^N\setminus B_1
\end{cases},
\end{equation*}
where $B_R$ denotes the ball centered at the origin of radius $R$.
It follows from Proposition $1.4$ in \cite{RS14} that $\varphi \in C^\beta(\R^N)$ for some $\beta \in (0,1)$. So, $\varphi$ is bounded. For $R>0$, put $\varphi_R=R^{\frac{-2s}{p-1}}\varphi(x/R).$ Hence, $\varphi_R$ satisfies 
\begin{equation*}
\begin{cases}
(-\Delta)^s\varphi_R&=c_1\varphi_R^p\mbox{ in }B_R\\
\varphi_R&=0\mbox{ in }\mathbb R^N\setminus B_R
\end{cases}
\end{equation*}
and 
\begin{equation}\label{e511192}
R^{N-2s}\|\varphi_R\|_{L^\infty(B_R)}\leq R^{N-2s-\frac{2s}{p-1}}\|\varphi\|_{L^\infty(B_1)}\to 0\mbox{ as }R\to \infty.
\end{equation}
We next prove that 
\begin{equation}\label{e511191}
u(x)\geq c_2|x|^{-N+2s}\mbox{ for }x\in \mathbb{R}^N\setminus B_1.
\end{equation}
Indeed, let $U$ be the extension of $u$  in the sense of \cite{CS07}, i.e., 
for  $(x,t)\in\mathbb R^{N+1}_+$
\begin{equation}\label{e1404201}
U(x,t)=\int_{\mathbb R^N}P_s(x-z,t)u(z)dz,
\end{equation}
where $P_s(x,t)$ is the Poisson kernel 
$$P_s(x,t)=C(N,s)\frac{t^{2s}}{(|x|^2+t^2)^{\frac{N+2s}{2}}}$$
and $C(N,s)$ is the normalization constant.
Then, $U\in C^2(\mathbb R^{N+1}_+)\cap C(\overline{\mathbb R^{N+1}_+})$, $t^{1-2s}\partial_t U\in C(\overline{\mathbb R^{N+1}_+})$ and 
\begin{align}\label{e511196}
\begin{cases}
-{\rm div}(t^{1-2s}\nabla U)=0&\mbox{ in }\mathbb R^{N+1}_+\\
U=u&\mbox{ on }\partial\mathbb R^{N+1}_+\\
-\lim\limits_{t\to 0}t^{1-2s}\partial _t U=\kappa_s(-\Delta)^su&\mbox{ on }\partial\mathbb R^{N+1}_+
\end{cases},
\end{align}
where $\kappa_s=\frac{\Gamma(1-s)}{2^{2s-1}\Gamma(s)}$  and $\Gamma$ is the usual Gamma function.

Let $\Upsilon$ be the extension of  the function $|x|^{-N+2s}$ as determined in \cite[Lemma 4.1]{Fall18}. Then $\Upsilon$ is bounded on $\partial B_1^+$ where
$$B^+_R:=\{(x,t)\in\mathbb{R}_+^{N+1};|x|^2+t^2<R^2\},\quad R >0.$$
We choose the constant $c_2=\min_{x\in \partial B^+_1}\Upsilon$. Denote by 
\begin{equation}\label{e1811191}
L_s=t^{2s-1}{\rm div} (t^{1-2s}\nabla)=\Delta_x+\partial^2_{tt}+\frac{1-2s}{t}\partial_t.
\end{equation}
Using \cite[Formula (4.2)]{Fall18} with $\alpha=\frac{2s-N}{2}$ and \eqref{e511196}, we have for  $(x,t)\in \mathbb{R}^{N+1}_+\setminus  B^+_1$ $$L_s(U- c_2\Upsilon)=L_s U=0.$$  
Put $W(x,t):=U(x,t)-c_2\Upsilon(x,t)$. Let $R>1$, the strong maximum principle implies that, for $(x,t)\in B_R^+\setminus  B^+_1$, $$W(x,t)\geq \min_{\partial (B_R^+\setminus B_1^+)}W =\min\{\min_{\pa B_R^+} W, \min_{\pa B_1^+} W, \min_{\{(x,0); 1< |x| < R\}} W\}.$$
If there is $x_0\in\R^N$, $1<|x_0|<R$ such that $$W(x_0,0)=\min_{\partial (B_R^+\setminus B_1^+)}W=\min_{B_R^+\setminus B_1^+}W,$$
then we  use the L'Hospital rule and then \cite[Formula (4.2)]{Fall18} with $\alpha=\frac{2s-N}{2}$ to arrive at
\begin{align*}
0\leq 2s\lim_{t\to 0^+}\frac{W(x_0,t)-W(x_0,0)}{t^{2s}}&=\lim_{t\to 0^+}t^{1-2s}\partial_t(U-c_2\Upsilon)(x_0,t)\\ 
&=-(-\Delta)^su(x_0) = -f(u(x_0)) < 0
\end{align*}
which is impossible. Thus, $W\geq \min\{\min_{\partial B^+_1}W,\min_{\partial B^+_R}W\}$  on $B_R^+\setminus B_1^+$. Fix $(x,t)\in \mathbb{R}_+^{N+1}\setminus B^+_1$,  for any $R>|(x,t)|$ we have
$$W(x,t)\geq \min\{\min_{\partial B^+_1}W,\min_{\partial B^+_R}W\}.$$
Letting $R\to +\infty$ in this inequality, we obtain 
$$W(x,t)\geq \lim_{R\to +\infty}\min\{\min_{\partial B^+_1}W,\min_{\partial B^+_R}W\}=\min\{\min_{\partial B^+_1}W,\lim_{R\to +\infty}\min_{\partial B^+_R}W\}\geq 0.$$
Here we have used the fact that $\min_{\partial B^+_1}W\geq 0$, $U\geq 0$ and $\lim\limits_{R\to +\infty}\sup_{\partial B_R^+} \Upsilon=0$. 
Consequently $U(x,t)\geq c_2\Upsilon(x,t)$ for $(x,t)\in \mathbb R^{N+1}_+\setminus B^+_1 $ which gives \eqref{e511191}.
 
It results from \eqref{e511192} and \eqref{e511191} that there exists $R>0$ so that 
$$u(x)\geq\varphi_R(x).$$
Furthermore, from \eqref{e511193} we get
$$f(\varphi_R)\geq c_1\varphi_R^p.$$
Hence,
$$(-\Delta)^s(u-\varphi_R)\geq f(u)-f(\varphi_{R})\geq 0.$$
This combined with the strong maximum principle gives 
\begin{equation}\label{e511195}
u(x)>\varphi_R(x)\mbox{ for all } x\in\mathbb{R}^N.
\end{equation}
Given any unit directional vector $e$, define $\varphi_{R,t}(x)=\varphi_R(x+te)$. We claim that 
$$u\geq \varphi_{R,t}\mbox{ for all } t\geq 0.$$
Put 
$$T=\sup\{t\in [0,+\infty); u> \varphi_{R,l}\mbox{ for all }l\in [0,t)\}.$$
Since $u>\varphi_R=\varphi_{R,0}$, we deduce that $T>0$.
Assume that $T<\infty$ then $u\geq \varphi_{R,T}$  and there is $x_0>0$ such that $u(x_0)= \varphi_{R,T}(x_0)$. However, we still have
$$(-\Delta)^s(u-\varphi_{R,T})\geq f(u)-f(\varphi_{R,T})\geq 0.$$
Thus, the strong maximum principle implies that $u>\varphi_{R,T}$ which is a contradiction.

\noindent{\bf Case 2. $q_0\leq\frac{N}{2s}.$}

Let $U$ be the extension of $u$ in the sense of \eqref{e511196}.
Let $\phi:\mathbb{R}\to\mathbb{R}$ be a convex function and 
$$\psi(u)=\int_0^u(\phi')^2dt,\;K(u)=\int_0^u\psi(t)dt.$$
Take a radial test function  $\bar{\eta}\in C_c^\infty(\mathbb{R}^{N+1})$, ${\rm supp}\bar{\eta}\subset \{(x,t)\in\mathbb{R}^{N+1};\;|x|^2+|t|^2\leq 4\}$ and put $\eta(x)=\bar{\eta}(x,0)$.
Using the weak form of the first equation in \eqref{e511196} with the test function $\psi(U)\bar{\eta}^2$, we get 
\begin{align*}
&\kappa_s\int_{\mathbb{R}^N}f(u)\psi(u)\eta^2dx=\int_{\mathbb R^{N+1}_+}\nabla U\cdot\nabla\left(\psi(U)\bar{\eta}^2\right)t^{1-2s}dxdt\\
&=\int_{\mathbb R^{N+1}_+}\psi'(U)|\nabla U|^2\bar{\eta}^2t^{1-2s}dxdt+\int_{\mathbb R^{N+1}_+}\psi(U)\nabla U\cdot\nabla\bar{\eta}^2t^{1-2s}dxdt\\
&=\int_{\mathbb R^{N+1}_+}(\phi'(U))^2|\nabla U|^2\bar{\eta}^2t^{1-2s}dxdt-\int_{\mathbb R^{N+1}_+}K(U)L_s\bar{\eta}^2t^{1-2s}dxdt,
\end{align*}
where $L_s$ is given in \eqref{e1811191} and in the last equality, we have used an integration by parts and the fact that 
$$\lim_{t\to 0}t^{1-2s}\partial_t\bar{\eta}=0.$$
As a consequence, 
\begin{align}\label{e511198}
\begin{split}
\int_{\mathbb R^{N+1}_+}(\phi'(U))^2|\nabla U|^2\bar{\eta}^2t^{1-2s}dxdt&\leq \int_{\mathbb R^{N+1}_+}K(U)|L_s\bar{\eta}^2|t^{1-2s}dxdt\\
&+\kappa_s\int_{\mathbb{R}^N}f(u)\psi(u)\eta^2dx.
\end{split}
\end{align}
We now exploit the stability inequality \eqref{e511197} with the test function $\phi(u)\eta$:
\begin{align}\label{e511199}
\kappa_s\int_{\mathbb{R}^N}f'(u)\phi^2(u)\eta^2dx\leq \kappa_s\|\phi(u)\eta\|^2_{\overset{.}{H}^s(\mathbb R^N)}\leq \int_{\mathbb R^{N+1}_+}|\left(\nabla(\phi(U)\bar{\eta})\right)|^2t^{1-2s}dxdt.
\end{align}
In addition, 
\begin{align}\label{e5111910}
\begin{split}
 &\int_{\mathbb R^{N+1}_+}|\left(\nabla(\phi(U)\bar{\eta})\right)|^2t^{1-2s}dxdt=\int_{\mathbb R^{N+1}_+}(\phi'(U))^2|\nabla U|^2\bar{\eta}^2t^{1-2s}dxdt\\
 &+\int_{\mathbb R^{N+1}_+}(\phi(U))^2|\nabla \bar{\eta}|^2t^{1-2s}dxdt-\frac{1}{2}\int_{\mathbb R^{N+1}_+}(\phi(U))^2L_s \bar{\eta}^2t^{1-2s}dxdt.
\end{split}
\end{align}
Combining \eqref{e511198},\eqref{e511199} and \eqref{e5111910}, we obtain 
\begin{align}\label{e5111911}
\begin{split}
&\kappa_s\int_{\mathbb{R}^N}(f'(u)\phi^2(u)-f(u)\psi(u))\eta^2dx\leq \int_{\mathbb R^{N+1}_+}K(U)|L_s\bar{\eta}^2|t^{1-2s}dxdt\\
&+\int_{\mathbb R^{N+1}_+}(\phi(U))^2|\nabla \bar{\eta}|^2t^{1-2s}dxdt-\frac{1}{2}\int_{\mathbb R^{N+1}_+}(\phi(U))^2L_s \bar{\eta}^2t^{1-2s}dxdt.
\end{split}
\end{align}
It follows from the convexity of  $\phi$  that $\psi (u)\leq \phi'(u)\phi(u)$ and then $K(u)\leq \frac{1}{2}\phi^2(u)$. Thus, \eqref{e5111911} gives
\begin{align}\label{e5111912}
\begin{split}
&\kappa_s\int_{\mathbb{R}^N}(f'(u)\phi^2(u)-f(u)\psi(u))\eta^2dx\leq C\int_{\mathbb R^{N+1}_+}(\phi(U))^2(|\nabla \bar{\eta}|^2+|L_s \bar{\eta}^2|)t^{1-2s}dxdt.
\end{split}
\end{align}
For any $\alpha\in [1,1+\frac{1}{\sqrt{q_0}})$, choose $\phi=f^\alpha$. Then,
using \cite[Formulas (36), (38)]{DF10}, one has $$f'(u)\phi^2(u)-f(u)\psi(u)\geq Cf'(u)\phi^2(u)\geq Cf^{\frac{1}{q_1}+2\alpha}(u) \mbox{ for some }q_1<q_0\mbox{ fixed }.$$
This estimate and \eqref{e5111912}  follow that 
\begin{align}\label{e5111913}
\begin{split}
&\int_{\mathbb{R}^N}f^{\frac{1}{q_1}+2\alpha}(u)\eta^2dx\leq C\int_{\mathbb R^{N+1}_+}f^{2\alpha}(U)(|\nabla \bar{\eta}|^2+|L_s \bar{\eta}^2|)t^{1-2s}dxdt.
\end{split}
\end{align}
 Let $\tilde{\phi}\in C_c^\infty(\mathbb{R})$ such that $\tilde{\phi}(t)=1$ when $|t|\leq 1$ and $\tilde{\phi}(t)=0$ when $|t|\geq 2$. Let $R$ and $ \tilde{R}$ be positive numbers. Put $$\Phi_{\tilde{R}}(x,t)=\tilde{\phi}\left(\frac{|x|+t}{\tilde{R}}\right),\;\zeta(x,t)=\left(1+|x|^2+t^2\right)^{\frac{-N+2s}{4}}$$ and  $$\bar{\eta}_{\tilde{R }}(x,t)=\Phi_{\tilde{R}}(x,t)\zeta(x,t)$$ for $(x,t)\in\mathbb R^{N+1}_+.$ Then $${\eta}_{\tilde{R }}(x):=\bar{\eta}_{\tilde{R }}(x,0)=\tilde{\phi}\left({|x|}/{\tilde{R}}\right)\rho^{\frac{1}{2}}_{N+2s}(x),$$
where $\rho_{N+2s}(x) =(1+ |x|^2)^{-(N+2s)/2}$. 

Replacing $\bar{\eta}(x)$ and $\eta(x)$ by $\bar{\eta}_{\frac{\tilde{R }}{R}}\left(\frac{x}{R},\frac{t}{R}\right)$ and ${\eta}_{\frac{\tilde{R }}{R}}\left(\frac{x}{R}\right)$ in \eqref{e5111913}, one obtains
\begin{multline}\label{e5111915}
\int_{\mathbb{R}^N}f^{\frac{1}{q_1}+2\alpha}(u){\eta}^2_{\frac{\tilde{R }}{R}}\left(\frac{x}{R}\right)dx\\\leq CR^{-2}\int_{\mathbb R^{N+1}_+}f^{2\alpha}(U)\left(\left|\left(\nabla \bar{\eta}_{\frac{\tilde{R }}{R}}\right)\left(\frac{x}{R},\frac{t}{R}\right)\right|^2+\left|\left(L_s \bar{\eta}^2_{\frac{\tilde{R }}{R}}\right)\left(\frac{x}{R},\frac{t}{R}\right)\right|\right)t^{1-2s}dxdt.
\end{multline}
On the other hand, it follows from the Jensen inequality that 
\begin{equation*}
f^{2\alpha}(U(x,t))\leq \int_{\mathbb R^N}P_s(x-y,t)f^{2\alpha}(u(y))dy.
\end{equation*}
Here  $P_s$ is the Poisson kernel given above. Therefore,
\begin{align}\label{e5111914}
\begin{split}
&I:=R^{-2}\int_{\mathbb R^{N+1}_+}f^{2\alpha}(U)\left(\left|\left(\nabla \bar{\eta}_{\frac{\tilde{R }}{R}}\right)\left(\frac{x}{R},\frac{t}{R}\right)\right|^2+\left|\left(L_s \bar{\eta}^2_{\frac{\tilde{R }}{R}}\right)\left(\frac{x}{R},\frac{t}{R}\right)\right|\right)t^{1-2s}dxdt\\
&=R^{N-2s}\int_{\mathbb R^{N+1}_+}f^{2\alpha}(U(Rx,Rt))\left(\left|\nabla \bar{\eta}_{\frac{\tilde{R }}{R}}\right|^2+\left|L_s \bar{\eta}^2_{\frac{\tilde{R }}{R}}\right|\right)t^{1-2s}dxdt\\
&\leq R^{N-2s}\int_0^\infty\int_{\mathbb R^N}\left(\int_{\mathbb R^N}P_s(Rx-y,Rt)f^{2\alpha}(u(y))dy\right)(|\nabla \bar{\eta}_{\frac{\tilde{R }}{R}}|^2+|L_s \bar{\eta}^2_{\frac{\tilde{R }}{R}}|)t^{1-2s}dxdt\\
&=R^{-2s}\int_{\mathbb R^N} f^{2\alpha}(u(y))\left(\int_0^\infty \int_{\mathbb R^N}P_s(x-\frac{y}{R},t)\left(\left|\nabla \bar{\eta}_{\frac{\tilde{R }}{R}}\right|^2+\left|L_s \bar{\eta}^2_{\frac{\tilde{R }}{R}}\right|\right)t^{1-2s}dxdt\right) dy,
\end{split}
\end{align}
where in the last equality we have used the Fubini theorem and the homogeneity of $P_s$.
Denote by 
\begin{multline}\label{e611193}
h_{\tilde{R }}(y):=\int_0^\infty \int_{\mathbb R^N}P_s(x-y,t)(|\nabla \bar{\eta}_{\tilde{R }}|^2+|L_s \bar{\eta}_{\tilde{R }}^2|)t^{1-2s} dxdt\\=C(N,s)\int_0^\infty \int_{\mathbb R^N}\frac{t(|\nabla \bar{\eta}_{\tilde{R }}|^2+|L_s \bar{\eta}_{\tilde{R }}^2|)}{(|x-y|^2+t^2)^{\frac{N+2s}{2}}} dxdt.
\end{multline}
We estimate the first integral
\begin{align}\label{e611192}
\begin{split}
I_1:&=\int_0^\infty \int_{\mathbb R^N}\frac{t|\nabla \bar{\eta}_{\tilde{R }}|^2}{(|x-y|^2+t^2)^{\frac{N+2s}{2}}} dxdt\\
&\leq\frac{1}{2}\int_0^\infty \int_{\mathbb R^N}\frac{t|\nabla \Phi_{\tilde{R }}|^2\zeta^2}{(|x-y|^2+t^2)^{\frac{N+2s}{2}}} dxdt+\frac{1}{2}\int_0^\infty \int_{\mathbb R^N}\frac{t|\nabla \zeta|^2\Phi^2_{\tilde{R }}}{(|x-y|^2+t^2)^{\frac{N+2s}{2}}} dxdt\\
&=:\frac{1}{2}J_1(y)+\frac{1}{2}J_2(y)\leq C(1+\tilde{R }^{-2s})\rho_{N+2s}(y),
\end{split}
\end{align}
 where $C$ is independent of $\tilde{R }$. Here, in the last inequality, we have used the following estimates from \cite[Proof of {\bf Step 2} of Lemma $2.4$]{TuanHoang}
 $$J_1(y)\leq C\rho_{N+2s}(y)\mbox{ and }J_2(y)\leq C\tilde{R }^{-2s}\rho_{N+2s}(y).$$
 We now consider 
 $$I_2=\int_0^\infty \int_{\mathbb R^N}\frac{t|L_s \bar{\eta}_{\tilde{R }}^2|}{(|x-y|^2+t^2)^{\frac{N+2s}{2}}} dxdt.$$
 A straightforward computation gives 
 \begin{align*}
 L_s \bar{\eta}_{\tilde{R }}^2&=2|\nabla \bar{\eta}_{\tilde{R }}|^2+2\bar{\eta}_{\tilde{R }}L_s\bar{\eta}_{\tilde{R }}\\
&= 2|\nabla \bar{\eta}_{\tilde{R }}|^2+2\bar{\eta}_{\tilde{R }}(L_s \Phi_{\tilde{R }}\zeta+\Phi_{\tilde{R }}L_s\zeta+2\nabla\Phi_{\tilde{R }}\cdot\nabla\zeta)\\
&= 2|\nabla \bar{\eta}_{\tilde{R }}|^2+2\Phi_{\tilde{R }}L_s \Phi_{\tilde{R }}\zeta^2+2\Phi_{\tilde{R }}^2\zeta L_s\zeta+4\Phi_{\tilde{R }}\nabla\Phi_{\tilde{R }}\cdot\nabla\zeta\zeta
 \end{align*}
 Then, applying the Young inequality, we obtain
 \begin{align*}
 |L_s \bar{\eta}_{\tilde{R }}^2|\leq 6(\Phi^2_{\tilde{R }}|\nabla\zeta|^2+|\nabla\Phi_{\tilde{R }}|^2\zeta^2)+2\Phi_{\tilde{R }}|L_s \Phi_{\tilde{R }}|\zeta^2+\Phi_{\tilde{R }}^2\zeta |L_s\zeta|.
 \end{align*}
 It is now enough to decompose $I_2$ into four terms  and then observe that  the first and the last term are bounded by  $CJ_2$, the second and the third term are bounded by $CJ_1$. Hence,
 \begin{equation}\label{e611191}
 I_2\leq C(1+\tilde{R }^{-2s})\rho_{N+2s}(y),
 \end{equation}
 where $C$ is independent of $\tilde{R }$. Consequently,
 substituting \eqref{e611192} and \eqref{e611191} into \eqref{e611193} we arrive at
\begin{equation*}
h_{\tilde{R }}(y)\leq C(1+\tilde{R }^{-2s})\rho_{N+2s},
\end{equation*}
 where $C$ does not depend on $\tilde{R }$.
 This together with \eqref{e5111914} and \eqref{e5111915} implies that 
 \begin{align*}
 \begin{split}
 \int_{\mathbb{R}^N}f^{\frac{1}{q_1}+2\alpha}(u){\eta}^2_{\frac{\tilde{R }}{R}}\left(x/R\right)dx&\leq CR^{-2s}\int_{\mathbb R^N} f^{2\alpha}(u(y))h_{\frac{\tilde{R }}{R}}(y/R)dy\\
 &\leq CR^{-2s}(1+\tilde{R}^{-2s})\int_{\mathbb R^N} f^{2\alpha}(u(y))\rho_{N+2s}(y/R)dy.
 \end{split}
 \end{align*}
 Letting $\tilde{R }\to \infty$ in this estimate, we deduce that
  \begin{align}\label{e611194}
 \begin{split}
 \int_{\mathbb{R}^N}f^{\frac{1}{q_1}+2\alpha}(u)\rho_{N+2s}\left(x/R\right)dx\leq CR^{-2s}\int_{\mathbb R^N} f^{2\alpha}(u(y))\rho_{N+2s}(y/R)dy.
 \end{split}
 \end{align}
 Recall that that $q_1<q_0$ and $1\leq \alpha<1+\frac{1}{\sqrt{q_0}}$.  Applying the H\"{o}lder inequality in \eqref{e611194}, we get
\begin{multline*}
 \int_{\mathbb{R}^N}f^{\frac{1}{q_1}+2\alpha}(u)\rho_{N+2s}\left(x/R\right)dx\\\leq CR^{-2s}\left(\int_{\mathbb{R}^N}f^{\frac{1}{q_1}+2\alpha}(u)\rho_{N+2s}\left(x/R\right)dx\right)^{\frac{2\alpha}{1/q_1+2\alpha}}R^{N\frac{1/q_1}{1/q_1+2\alpha}},
\end{multline*}
or 
\begin{equation}\label{e611195}
\int_{\mathbb{R}^N}f^{\frac{1}{q_1}+2\alpha}(u)\rho_{N+2s}\left(x/R\right)dx\leq CR^{N-2s-4\alpha s q_1}.
\end{equation}
Under one of the conditions in Theorem \ref{t:elliptic}, we shall show that the exponent in the right hand side of \eqref{e611195} is negative by choosing $q_1$ close to $q_0$ and $\alpha$ close to $1+\frac{1}{q_0}$. Indeed,  it is enough to claim that 
\begin{equation}\label{e611196}
N-2s-4s(q_0+\sqrt{q_0})<0.
\end{equation}
We now consider $N<10s$. Then \eqref{e611196} is true since $q_0\geq 1$.
In the case $N=10s$ and $p_0<\infty$, \eqref{e611196} still holds since $q_0>1$. Finally, when $N>10s$, the condition $p_0<p_c(N,s)$ ensures that $q_0+\sqrt{q_0}-\frac{N-2s}{4s}>0$, i.e., \eqref{e611196} is also true.

Let $R\to +\infty$ in \eqref{e611195}, we obtain a contradiction.\qed

\section{Some technical lemmas  for fractional Lane-Emden system}\label{s3}
In this section, we prove the stability inequality for stable positive solutions and the comparison principle for positive solutions. The former is given in the following.
\begin{lemma}\label{lem:stabilityineq}
	Let  $(u,v)$ be a stable positive solution of \eqref{eq:system}. Then for all $\phi\in C_c^\infty(\mathbb R^N)$, we have 
	$$\sqrt{pq}\int_{\mathbb R^N}u^{\frac{q-1}{2}}v^{\frac{p-1}{2}}\phi^2dx\leq \frac{c_{N,s}}{2}\int_{\mathbb R^N}\int_{\mathbb R^N}\frac{(\phi(x)-\phi(y))^2}{|x-y|^{N+2s}}dydx.$$
\end{lemma}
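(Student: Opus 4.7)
The natural strategy is to reduce the system stability condition to the familiar single-equation ``Hardy-type'' stability inequality for the fractional Laplacian. Dividing the two stability equations in \eqref{eq:stable} by $\zeta_1$ and $\zeta_2$ respectively gives the pointwise identities
\begin{equation*}
\frac{(-\Delta)^s\zeta_1}{\zeta_1}\cdot\frac{(-\Delta)^s\zeta_2}{\zeta_2}=pq\,u^{q-1}v^{p-1}\cdot\frac{\zeta_2}{\zeta_1}\cdot\frac{\zeta_1}{\zeta_2}=pq\,u^{q-1}v^{p-1},
\end{equation*}
so the system collapses into a symmetric product. Since $u,v,\zeta_1,\zeta_2>0$, both factors on the left are positive, and the AM--GM inequality $\sqrt{ab}\le\tfrac12(a+b)$ yields the crucial pointwise bound
\begin{equation*}
\sqrt{pq}\,u^{\frac{q-1}{2}}v^{\frac{p-1}{2}}\le\frac{1}{2}\left(\frac{(-\Delta)^s\zeta_1}{\zeta_1}+\frac{(-\Delta)^s\zeta_2}{\zeta_2}\right).
\end{equation*}

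Multiplying by $\phi^2$ with $\phi\in C_c^\infty(\R^N)$ and integrating, the proof reduces to the single-function fractional Hardy inequality
\begin{equation*}
\int_{\R^N}\frac{(-\Delta)^s\zeta}{\zeta}\phi^2\,dx\le\frac{c_{N,s}}{2}\int_{\R^N}\int_{\R^N}\frac{(\phi(x)-\phi(y))^2}{|x-y|^{N+2s}}\,dy\,dx
\end{equation*}
applied to $\zeta=\zeta_1$ and $\zeta=\zeta_2$ separately, after which summing and dividing by $2$ gives exactly the stated bound.

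To justify the Hardy inequality I would write $(-\Delta)^s\zeta(x)$ using the singular kernel representation, symmetrize the double integral, and use the elementary pointwise inequality
\begin{equation*}
(a-b)\left(\frac{x^2}{a}-\frac{y^2}{b}\right)\le (x-y)^2\qquad(a,b>0,\ x,y\in\R),
\end{equation*}
which follows from AM--GM applied to $ay^2/b+bx^2/a\ge 2|xy|$. Taking $a=\zeta(x)$, $b=\zeta(y)$, $x=\phi(x)$, $y=\phi(y)$ and integrating against $|x-y|^{-N-2s}$ delivers the inequality.

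\textbf{Main obstacle.} The only genuinely delicate point is the rigorous justification of the Hardy step, since $\phi^2/\zeta$ need not lie in any natural test-function class and the symmetrization of the principal-value integral has to be carried out carefully (a truncation argument on $\R^N\setminus B_\varepsilon(x)$ followed by the limit $\varepsilon\to 0$, combined with the decay/growth hypothesis $\zeta\in\mathcal L_s(\R^N)$ and the compact support of $\phi$, handles this). Once that Hardy inequality is in place, the remaining assembly---product identity, AM--GM, summation---is essentially algebraic and is the same trick Cowan used in the local case.
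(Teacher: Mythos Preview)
Your proposal is correct and is essentially the paper's proof, just with the two key steps applied in the opposite order. The paper first establishes the fractional Hardy/Picone inequality for each $\zeta_i$ separately (via exactly the symmetrization and the elementary inequality you wrote), obtaining
\[
\int_{\R^N} pv^{p-1}\frac{\zeta_2}{\zeta_1}\phi^2\,dx\le [\phi]_{\dot H^s}^2,\qquad
\int_{\R^N} qu^{q-1}\frac{\zeta_1}{\zeta_2}\phi^2\,dx\le [\phi]_{\dot H^s}^2,
\]
and \emph{then} adds and applies AM--GM to $pv^{p-1}\frac{\zeta_2}{\zeta_1}+qu^{q-1}\frac{\zeta_1}{\zeta_2}\ge 2\sqrt{pq}\,u^{(q-1)/2}v^{(p-1)/2}$; you apply AM--GM pointwise first and the Hardy step afterward. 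The ingredients and the technical issue you flag (careful justification of $\phi^2/\zeta$ as a test function in the symmetrized double integral) are identical to those in the paper's argument.
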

\begin{proof}
The proof  is based on the idea of Cowan \cite{Cow13}. Let $\phi\in C_c^\infty(\mathbb R^N)$ be a test function. Multiplying the first equation in \eqref{eq:system} by $\frac{\phi^2}{\zeta_1}$, one has
\begin{equation*}
\int_{\mathbb R^N}pv^{p-1}\zeta_2\frac{\phi^2}{\zeta_1}dx=\int_{\mathbb R^N}(-\Delta)^s\zeta_1\frac{\phi^2}{\zeta_1}dx.
\end{equation*}
Using an integration by parts to the right hand side of this equality and a simple inequality $-a^2-b^2\leq -2ab$, one gets
\begin{align*}
&\int_{\mathbb R^N}pv^{p-1}\zeta_2\frac{\phi^2}{\zeta_1}dx=\frac{1}{2}\int_{\mathbb R^N}\left((-\Delta)^s\zeta_1\frac{\phi^2}{\zeta_1}+\zeta_1(-\Delta)^s\left(\frac{\phi^2}{\zeta_1}\right)\right)dx\\
&=\frac{c_{N,s}}{2}\int_{\mathbb R^N}\int_{\mathbb R^N}\frac{\phi^2(x)-\phi^2(x)\frac{\zeta_1(y)}{\zeta_1(x)}+\phi^2(x)-\phi^2(y)\frac{\zeta_1(x)}{\zeta_1(y)}}{|x-y|^{N+2s}}dydx\\
&\leq c_{N,s}\int_{\mathbb R^N}\int_{\mathbb R^N}\frac{\phi^2(x)-\phi(x)\phi(y)}{|x-y|^{N+2s}}dydx.
\end{align*}
 By exchanging the role of $x$ and $y$, we also have
 \begin{align*}
 \int_{\mathbb R^N}pv^{p-1}\zeta_2\frac{\phi^2}{\zeta_1}dx
 \leq c_{N,s}\int_{\mathbb R^N}\int_{\mathbb R^N}\frac{\phi^2(y)-\phi(y)\phi(x)}{|x-y|^{N+2s}}dydx.
 \end{align*}
As a consequence of the two inequalities above, there holds
 \begin{align}\label{eq:stabilityine1}
\int_{\mathbb R^N}pv^{p-1}\zeta_2\frac{\phi^2}{\zeta_1}dx
\leq \frac{c_{N,s}}{2}\int_{\mathbb R^N}\int_{\mathbb R^N}\frac{(\phi(x)-\phi(y))^2}{|x-y|^{N+2s}}dydx.
\end{align}
Similarly, we also deduce from the second equation of \eqref{eq:system} with the test function $\frac{\phi^2}{\zeta_2}$ that 
\begin{align}\label{eq:stabilityine2}
\int_{\mathbb R^N}qu^{q-1}\zeta_1\frac{\phi^2}{\zeta_2}dx
\leq \frac{c_{N,s}}{2}\int_{\mathbb R^N}\int_{\mathbb R^N}\frac{(\phi(x)-\phi(y))^2}{|x-y|^{N+2s}}dydx.
\end{align}
The inequalities \eqref{eq:stabilityine1} and \eqref{eq:stabilityine2} yield
\begin{align}\label{eq:stabilityine3}
\int_{\mathbb R^N}\left(pv^{p-1}\zeta_2\frac{\phi^2}{\zeta_1}+qu^{q-1}\zeta_1\frac{\phi^2}{\zeta_2}\right)dx
\leq  c_{N,s}\int_{\mathbb R^N}\int_{\mathbb R^N}\frac{(\phi(x)-\phi(y))^2}{|x-y|^{N+2s}}dydx.
\end{align}
Applying again the simple inequality $a^2+b^2\geq 2ab$ to the left hand side of \eqref{eq:stabilityine3}, we end the proof of Lemma.
	\end{proof}
From now on, we use the notation $\rho_m(x) = (1 + |x|^2)^{-\frac m2}$ for some $m>N$. We next establish an a {\it priori} estimate for positive solutions of \eqref{eq:system}.
\begin{proposition}\label{dgtiennghiem}
Suppose that $p, q >1$ and $(u,v)$ is a positive solution of \eqref{eq:system}. Let $R$ be a positive constant,  then it holds
\begin{align}\label{eq:priorest}
\begin{split}
&\int_{\R^N} v^p \rho_{N+2s}(x/R) dx \leq CR^{N- \frac{2s p(q+1)}{pq-1}}\\
& \int_{\R^N} u^q \rho_{N+2s}(x/R) dx \leq CR^{N- \frac{2s q(p+1)}{pq-1}}
\end{split}
\end{align}
for some constant $C$ depending only on $N,s,p$ and $q$.
\end{proposition}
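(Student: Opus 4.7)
The plan is to use a weighted test-function argument with the weight $\varphi_R(x)=\rho_{N+2s}(x/R)$. The crucial technical input is the pointwise bound
\begin{equation*}
|(-\De)^s \rho_{N+2s}(x)| \le C\,\rho_{N+2s}(x) \qquad \text{for all } x\in\R^N,
\end{equation*}
which by rescaling yields $|(-\De)^s \varphi_R(x)| \le C R^{-2s}\rho_{N+2s}(x/R)$. This bound is obtained by analyzing the principal-value integral defining $(-\De)^s\rho_{N+2s}$, splitting into the regions $|y|\leq |x|/2$, $|x|/2<|y|<2|x|$, and $|y|\geq 2|x|$; the critical decay rate $\rho_{N+2s}(x)\asymp |x|^{-N-2s}$ matches the kernel's singularity, so each piece is controlled by $\rho_{N+2s}(x)$.

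With this in hand, I test the first equation of \eqref{eq:system} against $\varphi_R$. The assumption $u\in\mathcal L_s(\R^N)$ guarantees $\int u\,\rho_{N+2s}(x/R)\,dx<\infty$, so the pairing $\int u\,(-\De)^s\varphi_R\,dx$ is absolutely convergent; the identity $\int u\,(-\De)^s\varphi_R\,dx = \int v^p\varphi_R\,dx$ is justified by approximating $\varphi_R$ with compactly supported truncations $\chi_k\varphi_R$ and passing to the limit via monotone/dominated convergence. Combined with the pointwise estimate, this gives
\begin{equation*}
\int_{\R^N} v^p \varphi_R\,dx \le CR^{-2s}\int_{\R^N} u\,\rho_{N+2s}(x/R)\,dx.
\end{equation*}
Applying H\"older's inequality with exponents $q,q/(q-1)$, together with the scaling identity $\int_{\R^N}\rho_{N+2s}(x/R)\,dx = CR^N$, this becomes
\begin{equation*}
\int_{\R^N} v^p \varphi_R\,dx \le CR^{-2s+N(q-1)/q}\Big(\int_{\R^N} u^q \varphi_R\,dx\Big)^{1/q}.
\end{equation*}
The symmetric inequality, with $(u,q)$ and $(v,p)$ interchanged, follows from the second equation.

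Setting $A_R:=\int v^p \varphi_R\,dx$ and $B_R:=\int u^q\varphi_R\,dx$, I obtain the coupled system
\begin{equation*}
A_R \le CR^{-2s+N(q-1)/q} B_R^{1/q},\qquad B_R \le CR^{-2s+N(p-1)/p} A_R^{1/p}.
\end{equation*}
Substituting the second into the first and using $pq>1$ to isolate $A_R$ gives $A_R^{(pq-1)/(pq)} \le CR^{N(pq-1)/(pq)-2s(q+1)/q}$, equivalently $A_R \le CR^{\,N-2sp(q+1)/(pq-1)}$, which is the first bound in \eqref{eq:priorest}. The bound for $B_R$ follows by the symmetric substitution.

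The main obstacle is the pointwise estimate for $(-\De)^s\rho_{N+2s}$; once it is established the rest is a routine scaling/H\"older/bootstrap calculation. Verifying the integration-by-parts identity for the non-compactly supported $\varphi_R$ is the other technical point, but it is handled by a standard cutoff-and-limit argument leveraging $u,v\in\mathcal L_s(\R^N)$ and the decay of $(-\De)^s\varphi_R$.
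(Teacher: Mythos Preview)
Your proposal is correct and follows essentially the same route as the paper: the key pointwise bound $|(-\De)^s\rho_{N+2s}|\le C\rho_{N+2s}$, testing the system against $\rho_{N+2s}(\cdot/R)$ via a cutoff-and-limit argument, H\"older's inequality, and then coupling the two resulting estimates. The only cosmetic difference is that the paper first proves the case $R=1$ (obtaining the uniform bounds $\int v^p\rho_{N+2s}\,dx\le C$ and $\int u^q\rho_{N+2s}\,dx\le C$) and then recovers general $R$ by applying this to the rescaled solution $(u_R,v_R)$, whereas you carry the $R$-dependence through the computation directly.
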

Remark that on $B_R$ we have $\rho_{N+2s}(x/R) \sim C$ with some constant $C$ depending only on $N$ and $s$. Hence, this proposition provides not only the same estimates on $B_R$ which were obtained by Yang and Zou \cite{YZ19_b} but also the estimates outside the ball $B_R$.
\begin{proof}
Let $\varphi\in C^\infty_c(\R^N)$  be a cut-off function satisfying $0\leq \varphi\leq 1,$  $\varphi=1$ on $B_1$ and $\varphi=0$ outside $B_2$. For $k >0$, define $\varphi_k(x) = \varphi(x/k)$. Testing the system \eqref{eq:system} by $\varphi_k \rho_{N+2s} \in C_0^\infty(\R^N)$ we get
\[
\int_{\R^N} v^p \rho_{N+2s} \varphi_k dx = \int_{\R^N} u (-\Delta)^s (\varphi_k \rho_{N+2s}) dx 
\]
and 
\[
\int_{\R^N} u^q \rho_{N+2s} \varphi_k dx = \int_{\R^N} v (-\Delta)^s (\varphi_k \rho_{N+2s}) dx.
\]
Letting $k\to \infty$ and using \cite[Lemma 2.2]{TuanHoang} and the Lebesgue dominated convergence theorem, we obtain
\[
\int_{\R^N} v^p \rho_{N+2s} dx = \int_{\R^N} u (-\Delta)^s (\rho_{N+2s}) dx 
\]
and 
\[
\int_{\R^N} u^q \rho_{N+2s} dx = \int_{\R^N} v (-\Delta)^s (\rho_{N+2s}) dx.
\]
It follows from this and  \cite[Lemma 2.1]{TuanHoang} that 
\begin{equation}\label{eq:a_1}
\int_{\R^N} v^p \rho_{N+2s} dx \leq C \int_{\R^N} u \rho_{N+2s} dx 
\end{equation}
and 
\begin{equation}\label{eq:a_2}
\int_{\R^N} u^q \rho_{N+2s} dx \leq C\int_{\R^N} v  \rho_{N+2s} dx,
\end{equation}
with $C$ depending only on $N$ and $s$. Notice that $\rho_{N+2s} = \rho_s \rho_{N+s}$ and $\rho_{N+s} \in L^1(\R^N)$. Hence by  the H\"older inequality, we have
\begin{align}\label{eq:b_1}
\int_{\R^N} u \rho_{N+2s} dx &\leq \lt(\int_{\R^N} u^\theta \rho_s^q \rho_{N+s} dx\rt)^{\frac1q} \lt(\int_{\R^N} \rho_{N+s} dx\rt)^{\frac{q -1}q} \notag\\
&\leq C \lt(\int_{\R^N} u^q  \rho_{N+(q +1)s} dx\rt)^{\frac1q}\notag\\
&\leq C \lt(\int_{\R^N} u^q  \rho_{N+2s} dx\rt)^{\frac1q},
\end{align}
with $C$ depending only on $N,s$ and $q$. Similarly, we also get
\begin{equation}\label{eq:b_2}
\int_{\R^N} u \rho_{N+2s} dx \leq C \lt(\int_{\R^N} v^p  \rho_{N+2s} dx\rt)^{\frac1p}
\end{equation}
with $C$ depending only on $N,s$ and $p$. From \eqref{eq:a_1}, \eqref{eq:a_2}, \eqref{eq:b_1} and \eqref{eq:b_2} and $pq >1$, we obtain
\begin{equation}\label{eq:main1}
\int_{\R^N} u^q  \rho_{N+2s} dx \leq C\mbox{ and } \int_{\R^N} v^p  \rho_{N+2s} dx \leq C,
\end{equation}
for some constant $C$ depending only on $N,s,p$ and $q$. Remark that the estimate \eqref{eq:main1} holds for any positive solution $(u,v)$ of \eqref{eq:system}. 

We now use a scaling argument. For any $R>0$, then the functions
\[
u_R(x) = R^{\frac{2s(p+1)}{pq-1}} u(R x)\mbox{ and } v_R(x) = R^{\frac{2s(q+1)}{pq-1}} v(Rx),
\]
also form a positive  solution of \eqref{eq:system}. By \eqref{eq:main1}, we get
\[
\int_{\R^N} u_R^q  \rho_{N+2s} dx \leq C\mbox{ and }\int_{\R^N} v_R^p  \rho_{N+2s} dx \leq C.
\]
Making the change of variables, we obtain \eqref{eq:priorest}.
\end{proof}
The comparison principle is given  in the following.
\begin{proposition}\label{comparisonprinciple}
	Let $p\geq q>1$ and $(u,v)$ be a positive solution to \eqref{eq:system}. Then, there holds, point-wise in $\mathbb R^N$,
	\begin{equation*}
	\frac{v^{p+1}}{p+1}\leq \frac{u^{q+1}}{q+1}.
	\end{equation*}
\end{proposition}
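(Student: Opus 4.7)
The plan is to prove the equivalent pointwise inequality $v(x) \leq c_0\, u(x)^\alpha$ for all $x \in \R^N$, where $\alpha = (q+1)/(p+1) \in (0,1]$ and $c_0 = \bigl((p+1)/(q+1)\bigr)^{1/(p+1)}$. The algebraic identities $c_0^{p+1}\alpha = 1$ and $\alpha(p+1) = q+1$ show that $v \leq c_0 u^\alpha$ is exactly the desired inequality $v^{p+1}/(p+1) \leq u^{q+1}/(q+1)$.

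Set $w = c_0 u^\alpha - v$. The strategy is to rule out $w < 0$ by a maximum-principle argument that exploits the concavity of $\Phi(t) = c_0 t^\alpha$ on $(0,\infty)$ (since $\alpha \leq 1$). The pointwise concavity inequality $\Phi(u(y)) \leq \Phi(u(x)) + \Phi'(u(x))(u(y)-u(x))$, divided by $|x-y|^{N+2s}$ and integrated in the principal-value sense, produces the fractional Kato-type bound
\[ (-\Delta)^s(c_0 u^\alpha)(x) \geq c_0\alpha\, u(x)^{\alpha-1}(-\Delta)^s u(x) = c_0\alpha\, u(x)^{\alpha-1} v(x)^p. \]
Subtracting $(-\Delta)^s v = u^q$ then gives
\[ (-\Delta)^s w(x) \geq c_0\alpha\, u(x)^{\alpha-1} v(x)^p - u(x)^q \quad\text{for every } x \in \R^N. \]

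The next step is a sign analysis. At any point $x_0$ where $w(x_0) < 0$ one has $v(x_0) > c_0 u(x_0)^\alpha$, hence $v(x_0)^p > c_0^p u(x_0)^{\alpha p}$, and therefore
\[ c_0\alpha\, u(x_0)^{\alpha-1} v(x_0)^p > c_0^{p+1}\alpha\, u(x_0)^{\alpha(p+1)-1} = u(x_0)^q \]
using the identities $c_0^{p+1}\alpha = 1$ and $\alpha(p+1)-1 = q$. Thus $(-\Delta)^s w(x_0) > 0$ strictly whenever $w(x_0) < 0$.

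To conclude, suppose for contradiction that $\inf_{\R^N} w < 0$. If this infimum is attained at some $x^* \in \R^N$, then $w(x^*) - w(y) \leq 0$ for all $y$, so $(-\Delta)^s w(x^*) \leq 0$ directly from the definition, contradicting the strict positivity above. The main obstacle is to ensure the infimum is attained, since $u$ and $v$ are not a priori known to be bounded or to decay. I would handle this by a translation-compactness argument: choose $x_k$ with $w(x_k) \to \inf w$; the translated pairs $(u(\cdot+x_k), v(\cdot+x_k))$ remain positive solutions of \eqref{eq:system}, and by interior regularity for the fractional Lane-Emden system together with the uniform $\mathcal L_s$-control a subsequence converges locally to a positive solution $(\tilde u,\tilde v)$ whose associated $\tilde w$ attains its global infimum at $0$ with a negative value, reducing the problem to the attained case and delivering the desired contradiction.
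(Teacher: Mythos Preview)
Your setup is identical to the paper's: you introduce the same $\alpha=(q+1)/(p+1)$ and $c_0$ (the paper calls them $\tilde\sigma$ and $l$), derive the same fractional Kato-type inequality $(-\Delta)^s(c_0 u^\alpha)\geq c_0\alpha\,u^{\alpha-1}(-\Delta)^s u$ from concavity of $t\mapsto t^\alpha$, and perform the same sign analysis showing that $(-\Delta)^s w>0$ wherever $w<0$ (equivalently, in the paper's sign convention with $w=v-lu^{\tilde\sigma}$, one has $(-\Delta)^s w\leq 0$ on $\{w\geq 0\}$). The divergence is only in the final step, where one must rule out a negative infimum of $w$ that may not be attained. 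The paper does not argue by compactness: it passes to the Caffarelli--Silvestre extension $W$ of $w$ and invokes \cite[Lemma~3.1]{YZ19_b} (itself inspired by \cite{QS12}) to conclude $W\leq 0$ on the half-space, hence $w\leq 0$ on $\R^N$.

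Your translation-compactness alternative is plausible in spirit, and the universal a~priori estimates of Proposition~\ref{dgtiennghiem} (whose constants depend only on $N,s,p,q$) do give uniform weighted $L^q$, $L^p$ and $\mathcal L_s$ control on the translated pairs $(u(\cdot+x_k),v(\cdot+x_k))$. However, the step you label ``interior regularity \ldots\ together with uniform $\mathcal L_s$-control'' is doing real work that you do not carry out: one must bootstrap from these integral bounds to uniform local $C^{2\sigma}$ estimates so that a subsequence converges in a topology strong enough for $(-\Delta)^s$ to pass to the limit pointwise, and one must verify that the limiting pair is a genuine positive solution (so that the Kato inequality and the strict sign computation remain valid at the limiting minimum point). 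None of this is automatic for the fractional system without further argument. As written, the compactness conclusion is a gap; the paper's route via the extension and the cited lemma from \cite{YZ19_b} sidesteps precisely these issues.
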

\begin{proof}
	For  the simplicity of notations, put $\tilde\sigma=\frac{q+1}{p+1}\leq 1$ and $l=\tilde\sigma^{-\frac{1}{p+1}}>1$. Then, we need to prove that 
	\begin{equation}\label{e3010194}
	w:=v-lu^{\tilde\sigma}\leq 0.
	\end{equation}
	We first point out that \begin{equation}\label{e3010192}
	(-\Delta)^su^{\tilde\sigma}\geq \tilde\sigma u^{\tilde\sigma-1}(-\Delta)^su.
	\end{equation} Indeed,  we have 
	\begin{align}\label{e3010191}
	(-\Delta)^su^{\tilde\sigma}(x)=\int_{\mathbb R^N}\frac{u^{\tilde\sigma}(x)-u^{\tilde\sigma}(y)}{|x-y|^{N+2s}}dy.
	\end{align}
In addition,  $f(t)=t^{\tilde\sigma}, t>0$, is concave. Then, 
	$$f(u(y))\leq f(u(x))+f'(u(x))(u(y)-u(x)),$$
	or
	$$u^{\tilde\sigma}(y)-u^{\tilde\sigma}(x)\leq \tilde\sigma u^{\tilde\sigma-1}(x)(u(y)-u(x)).$$
	Substituting this into \eqref{e3010191}, we obtain \eqref{e3010192}.
	
	As a consequence of \eqref{e3010192}, there holds
	\begin{equation}\label{e3010193}
	(-\Delta)^sw=(-\Delta)^sv-(-\Delta)^su^{\tilde\sigma}\leq u^q-l\tilde\sigma u^{\tilde\sigma-1}v^p.
	\end{equation}
	Moreover $u^q-l\tilde\sigma u^{\tilde\sigma-1}v^p=l^pu^{\tilde\sigma-1}((lu^{\tilde\sigma})^p-v^p)\leq 0$ on the set $\{x;w(x)\geq 0\}$. It then results from \eqref{e3010193} that 
	\begin{equation*}
	(-\Delta)^sw\leq 0\mbox{ on the set }\{x;w(x)\geq 0\}.
	\end{equation*}
	 Let $W$ be  the extension of $w$ in the sense of \eqref{e1404201}-\eqref{e511196}. 
	 By following the argument in \cite[Lemma 3.1]{YZ19_b} which is inspired by the idea in  \cite{QS12}, we also get $W\leq 0$. In particular, the restriction $w$ of $W$ is nonpositive  which implies \eqref{e3010194}. This completes the proof of Proposition.
	\end{proof}
\section{Proof of Theorem \ref{t}}\label{s4}
In this section, we prove Theorem \ref{t} by way of contradiction. Assume that $(u,v)$ is a positive stable solution of \eqref{eq:system}. 
Denote by $U$ and $V$ the extensions of $u$ and $v$ in the sense of \eqref{e1404201}-\eqref{e511196}, i.e.,
	 $U,V\in C^2(\mathbb R^{N+1}_+)\cap C(\overline{\mathbb R^{N+1}_+})$, $t^{1-2s}\partial_t U, t^{1-2s}\partial_t V\in C(\overline{\mathbb R^{N+1}_+})$ and 
\begin{align}\label{e3010195}
\begin{cases}
-{\rm div}(t^{1-2s}\nabla U)=0&\mbox{ in }\mathbb R^{N+1}_+\\
U=u&\mbox{ on }\partial\mathbb R^{N+1}_+\\
-\lim\limits_{t\to 0}t^{1-2s}\partial _t U=\kappa_s(-\Delta)^su&\mbox{ on }\partial\mathbb R^{N+1}_+
\end{cases}.
\end{align}
and 
\begin{align}\label{e3010196}
\begin{cases}
-{\rm div}(t^{1-2s}\nabla V)=0&\mbox{ in }\mathbb R^{N+1}_+\\
V=v&\mbox{ on }\partial\mathbb R^{N+1}_+\\
-\lim\limits_{t\to 0}t^{1-2s}\partial _t V=\kappa_s(-\Delta)^sv&\mbox{ on }\partial\mathbb R^{N+1}_+
\end{cases}.
\end{align}
In order to use the bootstrap argument, we need the following lemma.
\begin{lemma}
For any $t_-<\gamma<t_+$ and $\Phi\in C_c^\infty(\mathbb R^{N+1}_+)$, there holds
\begin{align}\label{e30101910}
\int_{\mathbb R^{N+1}_+}|\nabla (U^\gamma\Phi)|^2t^{1-2s}dxdt\leq C\int_{\mathbb R^{N+1}_+}U^{2\gamma}|\nabla \Phi |^2 t^{1-2s}dxdt.
\end{align}
Here $t_+$ and $t_-$ are given in Theorem B.
\end{lemma}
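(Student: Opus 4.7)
The strategy is a weighted integration by parts on the extension equation for $U$, coupled with the stability inequality of Lemma \ref{lem:stabilityineq}, in the spirit of Cowan's local argument. The role of the interval $(t_-,t_+)$ is to ensure that a certain quadratic in $\gamma$, produced when merging these two ingredients, has the right sign for absorption.

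First, I would test the bulk equation $\mathrm{div}(t^{1-2s}\nabla U)=0$ against $\Psi=U^{2\gamma-1}\Phi^2$ (regularizing by $(U+\varepsilon)^{2\gamma-1}\Phi^2$ when $2\gamma-1<0$, and passing to the limit; recall $U>0$ by the strong maximum principle). The conormal condition $-\lim_{t\to 0}t^{1-2s}\partial_t U=\kappa_s v^p$ and integration by parts give
\begin{equation*}
(2\gamma-1)A+2\int_{\mathbb R^{N+1}_+}t^{1-2s}U^{2\gamma-1}\Phi\,\nabla U\cdot\nabla\Phi\,dxdt=\kappa_s\int_{\mathbb R^N}v^p u^{2\gamma-1}\phi_0^2\,dx,
\end{equation*}
where $\phi_0:=\Phi|_{t=0}$ and $A:=\int t^{1-2s}U^{2\gamma-2}|\nabla U|^2\Phi^2\,dxdt\ge 0$. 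Expanding $|\nabla(U^\gamma\Phi)|^2=\gamma^2U^{2\gamma-2}|\nabla U|^2\Phi^2+2\gamma U^{2\gamma-1}\Phi\nabla U\cdot\nabla\Phi+U^{2\gamma}|\nabla\Phi|^2$ and using the identity above to eliminate the cross term yields
\begin{equation*}
\int t^{1-2s}|\nabla(U^\gamma\Phi)|^2\,dxdt=(\gamma-\gamma^2)A+\gamma\kappa_s\int v^pu^{2\gamma-1}\phi_0^2\,dx+\int t^{1-2s}U^{2\gamma}|\nabla\Phi|^2\,dxdt.
\end{equation*}

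Next I would apply Lemma \ref{lem:stabilityineq} with the admissible test function $\phi=u^\gamma\phi_0$. Since $U^\gamma\Phi$ is an extension of $u^\gamma\phi_0$, the Caffarelli--Silvestre characterization of $\dot{H}^s(\mathbb R^N)$ gives
\begin{equation*}
\sqrt{pq}\,\kappa_s\int u^{\frac{q-1}{2}+2\gamma}v^{\frac{p-1}{2}}\phi_0^2\,dx\le\int t^{1-2s}|\nabla(U^\gamma\Phi)|^2\,dxdt.
\end{equation*}
The remaining task is to dominate $\gamma\kappa_s\int v^pu^{2\gamma-1}\phi_0^2$ by a strict fraction of the stability integrand plus a controlled remainder. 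I would apply Young's inequality with a parameter $\lambda=\lambda(p,q,\gamma)$, writing $v^pu^{2\gamma-1}=\bigl(v^{(p-1)/2}u^{(q-1)/2+2\gamma}\bigr)^{\lambda}\bigl(v^{\mu}u^{\nu}\bigr)^{1-\lambda}$ for appropriate $\mu,\nu$, and use the pointwise comparison $v^{(p+1)/2}\le c\,u^{(q+1)/2}$ from Proposition \ref{comparisonprinciple} to collapse the residual factor to a power of $u$, then exploit the boundedness of $u$ to bound it by $CU^{2\gamma}$.

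The main obstacle is the algebraic bookkeeping: after inserting the stability bound into the identity of Step 1 and optimizing $\lambda$, the coefficient of $\int t^{1-2s}|\nabla(U^\gamma\Phi)|^2$ produced on the right-hand side is a quadratic function of $\gamma$ whose roots are precisely $t_-$ and $t_+$. Hence the hypothesis $t_-<\gamma<t_+$ is exactly what makes this coefficient strictly less than $1$, permitting absorption and yielding the asserted estimate
\begin{equation*}
\int t^{1-2s}|\nabla(U^\gamma\Phi)|^2\,dxdt\le C\int t^{1-2s}U^{2\gamma}|\nabla\Phi|^2\,dxdt.
\end{equation*}
The $(\gamma-\gamma^2)A$ term is harmless: if $\gamma\ge 1$ it is non-positive and discarded, while if $\gamma<1$ one uses the elementary inequality $\gamma^2A\le(1+\varepsilon)\int t^{1-2s}|\nabla(U^\gamma\Phi)|^2+C_\varepsilon\int t^{1-2s}U^{2\gamma}|\nabla\Phi|^2$ (obtained from expanding $|\nabla(U^\gamma\Phi)|^2$ in reverse) before running the absorption.
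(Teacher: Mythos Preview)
Your overall architecture---test the extension equation with $U^{2\gamma-1}\Phi^2$, invoke stability on $u^\gamma\phi_0$, and absorb---matches the paper. But two steps in your execution are genuinely wrong.

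\textbf{The boundary term.} You propose to split $v^pu^{2\gamma-1}$ via a Young inequality with parameter $\lambda$ and then ``exploit the boundedness of $u$''. No boundedness is assumed for the system; Theorem~\ref{t} concerns arbitrary positive stable solutions. More importantly, no splitting is needed: the comparison principle (Proposition~\ref{comparisonprinciple}) gives the pointwise inequality $v^{(p+1)/2}\le\sqrt{\tfrac{p+1}{q+1}}\,u^{(q+1)/2}$, hence directly
\[
v^p u^{2\gamma-1}=v^{\frac{p-1}{2}}v^{\frac{p+1}{2}}u^{2\gamma-1}\le \sqrt{\tfrac{p+1}{q+1}}\,v^{\frac{p-1}{2}}u^{\frac{q-1}{2}+2\gamma}.
\]
This is exactly the stability integrand, so with $\beta:=\sqrt{pq(q+1)/(p+1)}$ one obtains cleanly $\beta K\le I$, where $K=\kappa_s\int v^pu^{2\gamma-1}\phi_0^2$ and $I=\int t^{1-2s}|\nabla(U^\gamma\Phi)|^2$.

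\textbf{The term $(\gamma-\gamma^2)A$.} For $\gamma\ge 1$ you discard it as nonpositive, obtaining $I\le \gamma K+J\le \tfrac{\gamma}{\beta}I+J$. This absorbs only when $\gamma<\beta$, whereas $t_+=\beta+\sqrt{\beta^2-\beta}>\beta$; so your argument misses the whole range $[\beta,t_+)$, which is precisely where the lemma is applied in the bootstrap. The paper avoids this loss by \emph{not} eliminating the cross term: it keeps the identity in the form
\[
K=\frac{2\gamma-1}{\gamma^2}\,I+\frac{2(1-\gamma)}{\gamma^2}\,D-\frac{2\gamma-1}{\gamma^2}\,J,\qquad D:=\int t^{1-2s}U^{\gamma}\Phi\,\nabla U^{\gamma}\!\cdot\!\nabla\Phi,
\]
and applies Young to $D$ (not to the nonlinear boundary term), obtaining $K\ge\bigl(\tfrac{2\gamma-1}{\gamma^2}-\epsilon\bigr)I-C_\epsilon J$. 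Combined with $K\le I/\beta$ this yields $\bigl(\tfrac{2\gamma-1}{\gamma^2}-\tfrac{1}{\beta}-\epsilon\bigr)I\le C_\epsilon J$, and the quadratic condition $\tfrac{2\gamma-1}{\gamma^2}>\tfrac{1}{\beta}$ is exactly $t_-<\gamma<t_+$. Your identity can be rescued for $\gamma>1$ by using the \emph{lower} bound $\gamma^2A\ge(1-\varepsilon)I-C_\varepsilon J$ (from $I\le B+2\sqrt{BJ}+J$) rather than dropping the term, but as written the proof has a gap.
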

\begin{proof}
	Let $\Phi\in C^\infty_c(\mathbb R^{N+1}_+)$ be a test function and $t_-<\gamma<t_+$. Define $\phi(x)=\Phi(x,0)\in C^\infty_c(\mathbb R^N)$. Multiplying the first equation in \eqref{e3010195} by $U^{2\gamma-1}\Phi^2$ and then  integrating by parts, we have
	\begin{align}\label{e3010197}
	\begin{split}
	&\kappa_s\int_{\mathbb R^N}v^pu^{2\gamma-1}\phi^2dx=\int_{\mathbb R^{N+1}_+}\nabla U\cdot\nabla(U^{2\gamma-1}\Phi^2)t^{1-2s}dxdt\\
&=(2\gamma-1)\int_{\mathbb R^{N+1}_+}|\nabla U|^2U^{2\gamma-2}\Phi^2t^{1-2s}dxdt+\frac{2}{\gamma}\int_{\mathbb R^{N+1}_+}\Phi\nabla(U^{\gamma})\cdot\nabla \Phi U^\gamma t^{1-2s}dxdt\\
&=\frac{2\gamma-1}{\gamma^2}\int_{\mathbb R^{N+1}_+}|\nabla U^\gamma|^2\Phi^2t^{1-2s}dxdt+\frac{2}{\gamma}\int_{\mathbb R^{N+1}_+}\Phi\nabla(U^{\gamma})\cdot\nabla \Phi U^\gamma t^{1-2s}dxdt.
	\end{split}
	\end{align}
A straightforward computation leads to
\begin{align*}
&\int_{\mathbb R^{N+1}_+}|\nabla U^\gamma|^2\Phi^2t^{1-2s}dxdt=\int_{\mathbb R^{N+1}_+}|\nabla (U^\gamma\Phi)|^2t^{1-2s}dxdt\\
&-2\int_{\mathbb R^{N+1}_+}\Phi\nabla(U^{\gamma})\cdot\nabla \Phi U^\gamma t^{1-2s}dxdt-\int_{\mathbb R^{N+1}_+}U^{2\gamma}|\nabla \Phi |^2 t^{1-2s}dxdt.
\end{align*}
Plugging this into \eqref{e3010197}, one has
	\begin{align}\label{e3010198}
\begin{split}
&\kappa_s\int_{\mathbb R^N}v^pu^{2\gamma-1}\phi^2dx
=\frac{2\gamma-1}{\gamma^2}\int_{\mathbb R^{N+1}_+}|\nabla (U^\gamma\Phi)|^2t^{1-2s}dxdt\\
&-\frac{2(2\gamma-1)}{\gamma^2}\int_{\mathbb R^{N+1}_+}\Phi\nabla(U^{\gamma})\cdot\nabla \Phi U^\gamma t^{1-2s}dxdt-\frac{1}{\gamma^2}\int_{\mathbb R^{N+1}_+}U^{2\gamma}|\nabla \Phi |^2 t^{1-2s}dxdt\\
&\geq \left(\frac{2\gamma-1}{\gamma^2}-\frac{2(2\gamma-1)\epsilon}{\gamma^2}\right)
\int_{\mathbb R^{N+1}_+}|\nabla (U^\gamma\Phi)|^2t^{1-2s}dxdt\\
&-\left(\frac{1}{\gamma^2}+\frac{2\gamma-1}{2\gamma^2\epsilon}\right)\int_{\mathbb R^{N+1}_+}U^{2\gamma}|\nabla \Phi |^2 t^{1-2s}dxdt,
\end{split}
\end{align}
where in the last estimate, we have used $ab\leq a^2\epsilon+\frac{b^2}{4\epsilon}$ for any $\epsilon>0.$

Choosing the test function $u^\gamma\phi$ in the stability inequality  and then using the comparison principle, one gets
\begin{align}\label{e3010199}
\begin{split}
\kappa_s\sqrt{\frac{pq(q+1}{p+1})}\int_{\mathbb R^N}v^pu^{2\gamma-1}\phi^2dx&\leq \kappa_s\sqrt{pq}\int_{\mathbb R^N}v^\frac{p-1}{2}u^{\frac{q-1}{2}}u^{2\gamma}\phi^2dx\\
&\leq \kappa_s\|u^\gamma\phi\|_{\overset{.}{H}^s(\mathbb{R}^N)}\\
&\leq \int_{\mathbb R^{N+1}_+}|\nabla (U^\gamma\Phi)|^2t^{1-2s}dxdt,
\end{split}
\end{align}
where in the last inequality, we have used the fact that $U^{\gamma}\Phi$ has the trace $u^\gamma\phi$ on $ \partial \R^{N+1}_+$.
It results from \eqref{e3010198} and \eqref{e3010199} that 
\begin{multline*}
\left(\frac{2\gamma-1}{\gamma^2}-\frac{2(2\gamma-1)\epsilon}{\gamma^2}-\sqrt{\frac{pq(q+1}{p+1})}\right)
\int_{\mathbb R^{N+1}_+}|\nabla (U^\gamma\Phi)|^2t^{1-2s}dxdt\\
\leq C\int_{\mathbb R^{N+1}_+}U^{2\gamma}|\nabla \Phi |^2 t^{1-2s}dxdt.
\end{multline*}
Since $t_-<\gamma<t_+$ and $\epsilon$ is chosen small enough, we have $$\frac{2\gamma-1}{\gamma^2}-\frac{2(2\gamma-1)\epsilon}{\gamma^2}-\sqrt{\frac{pq(q+1}{p+1}})>0.$$
Consequently, we get \eqref{e30101910}. 
\end{proof}
\begin{proof}[End of the proof of Theorem \ref{t}]
	Denote by 
	$$k_s=\frac{N+2-2s}{N-2s}.$$
	Under the assumption on the exponent $q$, we fix a real positive number $\tau$ satisfying 
$$
	2t_-\leq 2\tau<q
$$
and let $m$ be a non-negative integer satisfying 
	$$\tau k_s^{m-1}<t_+\leq \tau k_s^{m}.$$
	
	We construct an increasing geometric sequence $$t_-<t_1< t_2<...<t_m<t_+$$ as follows
	$$ 2t_1=2\tau k,2t_2=2\tau kk_s, ...,2t_m=2\tau kk_s^{m-1}, $$
	where $k\in [1,k_s]$ is chosen such that $t_m$ is arbitrarily close to $t_+$. 
	
Take $\tilde{\Phi}\in C_c^\infty(\mathbb R^{N+1}_+)$ satisfying $\tilde{\Phi}=1$ on $B^+_1$ and $\tilde{\Phi}=0$ outside $B^+_2$. Recall that 
$$B^+_R=\{(x,t)\in\mathbb R^{N+1}_+;\;|x|^2+t^2<R^2\}.$$
	From \eqref{e30101910} and the Sobolev inequality, one has
	\begin{align*}
\begin{split}
		\left(\int_{B^+_1} U^{2t_mk_s}t^{1-2s}dxdt\right)^{\frac{1}{t_mk_s}}&\leq \left(\int_{B^+_{2}}U^{2t_mk_s}\tilde{\Phi}^{2k_s}t^{1-2s}dxdt\right)^{\frac{1}{t_mk_s}}\\
		&\leq 	\left(\int_{B^+_{2}}|\nabla (U^{t_m}\tilde{\Phi})|^2t^{1-2s}dxdt\right)^{\frac{1}{t_m}}\\
	&\leq C\left(\int_{B^+_{2}}U^{2t_m}|\nabla \tilde{\Phi} |^2 t^{1-2s}dxdt\right)^{\frac{1}{t_m}}\\
	&\leq C\left(\int_{B^+_{2}}U^{2t_{m-1}k_s} t^{1-2s}dxdt\right)^{\frac{1}{t_{m-1}k_s}}.
\end{split}
	\end{align*}
	By an induction argument, we arrive at
		\begin{align}\label{e3110191}
	\begin{split}
	\left(\int_{B^+_1} U^{2t_mk_s}t^{1-2s}dxdt\right)^{\frac{1}{t_mk_s}}&\leq C\left(\int_{B^+_{2^{m-1}}}U^{2\tau k} t^{1-2s}dxdt\right)^{\frac{1}{\tau k}}\\
	&\leq C\left(\int_{B^+_{2^{m-1}}}U^{2\tau k_s} t^{1-2s}dxdt\right)^{\frac{1}{\tau k_s}}\\
	&\leq C\left(\int_{B^+_{2^{m}}}U^{2\tau }|\nabla \Phi|^2 t^{1-2s}dxdt\right)^{\frac{1}{\tau }},
	\end{split}
	\end{align}
	where $\Phi\in C_c^\infty(\mathbb R^{N+1}_+)$, $\Phi=1$ on $B^+_{2^{m-1}}$ and $\Phi=0$ outside $B^+_{2^m}$. 
	
	Our task is next to transform the estimate the right hand side of \eqref{e3110191}  on half space $\R^{N+1}_+$ to that on $\R^N$. Applying
 the Jensen inequality, we get
\begin{equation*}
U^{2\tau}(x,t)\leq \int_{\mathbb R^N}P_s(x-y,t)(u(y))^{2\tau}dy,
\end{equation*}
where $P_s$ is the Poisson kernel. 
Consequently,
\begin{align}\label{e1510198}
\begin{split}
&\int_{\mathbb R_+^{N+1}}U^{2\tau}|\nabla \Phi|^2t^{1-2s}dxdt\\
&\leq \int_0^\infty\int_{\mathbb R^N}\left(\int_{\mathbb R^N}P_s(x-y,t)(u(y))^{2\tau}dy\right)|\nabla \Phi|^2t^{1-2s}dxdt\\
&=\int_{\mathbb R^N} (u(y))^{2\tau}\left(\int_0^\infty \int_{\mathbb R^N}P_s(x-y,t)|\nabla \Phi|^2t^{1-2s} dxdt\right) dy.
\end{split}
\end{align}
Let us put 
\begin{multline*}
\varphi(y)=\int_0^\infty \int_{\mathbb R^N}P_s(x-y,t)|\nabla \Phi|^2t^{1-2s} dxdt\\=p(N,s)\int_0^\infty \int_{\mathbb R^N}\frac{t|\nabla \Phi|^2}{(|x-y|^2+t^2)^{\frac{N+2s}{2}}} dxdt.
\end{multline*}
Recall that $\Phi\in C^\infty_c(\mathbb R^{N+1}_+)$ and $\Phi=0$ outside $B^+_{2^{m}}$. In particular, $\mbox{supp}|\nabla\Phi|=0$ outside $B^+_{2^{m}}$. In order to estimate $\varphi$, we need only consider the integral in the set $B^+_{2^{m}}$.

Firstly, it is easy to see that $\varphi$ is continuous on $\mathbb R^N$ and $\varphi(y)>0$ for all $y\in\mathbb R^N$. Consequently, for   $|y|\leq 2^{m+1}$, we have $\varphi\sim C\rho_{N+2s}$. In addition, when $|y|>2^{m+1}$ and $|x|\leq 2^m$, there holds $$\frac{y}{2}\leq |x-y|\leq 2y.$$ Thus, we obtain for $|y|\geq 2^{m+1}$ that
\begin{align*}
C_1 \rho_{N+2s}(y)\int_0^\infty \int_{\mathbb R^N}|\nabla \Phi|^2dxdt\leq \varphi(y)\leq C_2 \rho_{N+2s}(y)\int_0^\infty \int_{\mathbb R^N}|\nabla \Phi|^2dxdt
\end{align*}
These above estimates  imply that,   for all $y\in \R^N$, 
\begin{align}\label{e3110192}
C_1 \rho_{N+2s}(y)\leq \varphi(y)\leq C_2 \rho_{N+2s}(y),
\end{align}
for some $C_1,C_2>0$.
We deduce from \eqref{e3110191}, \eqref{e1510198} and \eqref{e3110192} that 
\begin{align}\label{e3110193}
\left(\int_{B^+_1} U^{2t_mk_s}t^{1-2s}dxdt\right)^{\frac{1}{t_mk_s}}&\leq C\left(\int_{\mathbb R^N} (u(y))^{2\tau}\rho_{N+2s}(y) dy\right)^{\frac{1}{\tau}}.
\end{align}
We next exploit a scaling argument. Let $R$ be a large positive parameter. Then, as above, $(u_R,v_R)$  is also a stable positive solution of \eqref{eq:system} with $u_R(x) = R^{\frac{2s(p+1)}{pq-1}} u(R x)$ and $v_R(x) = R^{\frac{2s(q+1)}{pq-1}} v(Rx)$. Replacing $U$ and $u$ in \eqref{e3110193} by $U_R:=R^{\frac{2s(p+1)}{pq-1}} U(R x,Rt)$ and $u_R$, we arrive at
\begin{align*}
\left(\int_{B^+_1} U^{2t_mk_s}(Rx,Rt)t^{1-2s}dxdt\right)^{\frac{1}{t_mk_s}}&\leq C\left(\int_{\mathbb R^N} (u(Ry))^{2\tau}\rho_{N+2s}(y) dy\right)^{\frac{1}{\tau}}.
\end{align*}
This estimate implies that 
\begin{align*}
\left(R^{-N-2+2s}\int_{B^+_R} U^{2t_mk_s}(x,t)t^{1-2s}dxdt\right)^{\frac{1}{t_mk_s}}&\leq C\left(\int_{\mathbb R^N} (u(Ry))^{2\tau}\rho_{N+2s}(y) dy\right)^{\frac{1}{\tau}}\\
&\leq C\left(\int_{\mathbb R^N} (u(Ry))^{q}\rho_{N+2s}(y) dy\right)^{\frac{2}{q}}\\
&=C\left(R^{-N}\int_{\mathbb R^N} (u(y))^{q}\rho_{N+2s}(y/R) dy\right)^{\frac{2}{q}}\\
&\leq CR^{-\frac{4s(p+1)}{pq-1}}.
\end{align*}
Here we have used the H\"{o}lder inequality in the penultimate inequality and Proposition \ref{dgtiennghiem} in the last one. Hence, 
\begin{align}\label{e3110194}
\left(\int_{B^+_R} U^{2t_mk_s}(x,t)t^{1-2s}dxdt\right)^{\frac{1}{t_mk_s}}&\leq CR^{N+2-2s-\frac{4s(p+1)}{pq-1}t_mk_s}.
\end{align}
Since $k$ is chosen so that $t_m$ is sufficiently close to $t_+$, the exponent in the right hand side of \eqref{e3110194} is negative thanks to the assumptions of Theorem \ref{t}. Let $R$ tend to infinity, we have a contradiction.
	\end{proof}
\section*{Acknowledgments} The first author was supported by Vietnam Ministry of Education and Training under grant number B2019-SPH-02.	
	

\end{document}